\newtheorem{lem}{Lemma}
\newtheorem{lemma}[lem]{Lemma}
\newtheorem{prop}{Proposition}
\newtheorem{thm}{Theorem}
\newtheorem{theorem}[thm]{Theorem}
\def\mand{\qquad\mbox{and}\qquad}
\def\\{\cr}
\def\({\left(}
\def\){\right)}
\def\[{\left[}
\def\]{\right]}
\def\<{\langle}
\def\>{\rangle}
\def\lcm{\mathrm{lcm}\,}
\def\fl#1{\left\lfloor#1\right\rfloor}
\def\cA{{\mathcal A}}
\def\cL{{\mathcal L}}
\def\cM{{\mathcal M}}
\def\cN{{\mathcal N}}
\def\cR{{\mathcal R}}
\def\cS{{\mathcal S}}
\def\cP{{\mathcal P}}
\def\cQ{{\mathcal Q}}
\def\N{{\mathbb N}}
\def\Z{{\mathbb Z}}
\def\K{{K}}
\def\Q{{\mathbb Q}}
\def\C{{\mathbb C}}
\begin{document}

\title{On numbers $n$ dividing the $n$th term of a linear recurrence}

\author{
{\sc Juan~Jos\'e~Alba~Gonz\'alez}\\ 
Instituto de Matem{\'a}ticas,\\
Universidad Nacional Autonoma de M{\'e}xico,\\
C.P. 58089, Morelia, Michoac{\'a}n, M{\'e}xico\\
{\tt jjalba@gmail.com}
\and
{\sc Florian~Luca}\\ 
Instituto de Matem{\'a}ticas,\\
Universidad Nacional Autonoma de M{\'e}xico,\\
C.P. 58089, Morelia, Michoac{\'a}n, M{\'e}xico\\
{\tt fluca@matmor.unam.mx}
\and
{\sc Carl~Pomerance}\\
Mathematics Department, 
Dartmouth College,\\
Hanover, NH 03755, USA\\
{\tt carl.pomerance@dartmouth.edu}
\and
{\sc Igor E. Shparlinski}  \\
Dept. of Computing, Macquarie University \\
Sydney, NSW 2109, Australia\\
{\tt igor.shparlinski@mq.edu.au}
}

\date{}

\pagenumbering{arabic}

\maketitle

\begin{abstract}
Here, we give upper and lower bounds on the count of positive integers $n\le x$ dividing the $n$th term of a nondegenerate linearly recurrent sequence with simple roots. 
\end{abstract}

\section{Introduction}

 Let $\{u_n\}_{n\ge 0}$ be a linear recurrence sequence of integers
satisfying a homogeneous linear recurrence relation
\begin{equation}
\label{eq:LRS}
u_{n+k} = a_1 u_{n+k-1} + \cdots + a_{k-1} u_{n+1} + a_k u_n,
\qquad {\text{\rm for}}\quad n =0,1, \ldots,
\end{equation}
where $a_1,\ldots, a_k$ are integers with $a_k \ne 0$. 

In this paper, we study the  set of indices $n$ which divide the
corresponding term $u_n$; that is, the set:
$$
\cN_u : = \{n\ge 1 \ : \  n\mid u_n\}.
$$
But first, some background on linear recurrence sequences.

To the recurrence~\eqref{eq:LRS}, we associate its {\it characteristic polynomial}
$$
f_u(X) : = X^k - a_1X^{k-1} - \cdots - a_{k-1}X - a_k =\prod_{i=1}^m (X-\alpha_i)^{\sigma_i}\in \Z[X],
$$
where $\alpha_1,\ldots,\alpha_m\in\C$ are the distinct roots of $f_u(X)$ with multiplicities 
$\sigma_1,\ldots,\sigma_m$, respectively.
It is then well-known that the general term of the recurrence can be expressed as
\begin{equation}
\label{eq:un}
u_n=\sum_{i=1}^m A_i(n)\alpha_i^n,\qquad {\text{\rm for}}\quad n=0,1,\ldots,
\end{equation}
where $A_i(X)$ are polynomials of degrees at most $\sigma_i-1$
for $i=1,\ldots,m$, with coefficients in
$K:=\Q[\alpha_1,\ldots,\alpha_m]$. We refer to~\cite{EvdPSW} for this and other known facts about
linear recurrence sequences.

For upper bounds on the distribution of $\cN_u$,
the case of a linear recurrence with multiple roots already can pose problems (but see below). 
For example, the sequence of general term 
$u_n=n2^n$ for all $n\ge 0$ having characteristic polynomial $f_u(X)=(X-2)^2$ shows that $\cN_u$ may contain all the positive integers.
So, we look at the case when $f_u(X)$ has only simple roots. In this case, the relation~\eqref{eq:un} becomes
\begin{equation}
\label{eq:un1}
u_n=\sum_{i=1}^k A_i\alpha_i^n,\qquad {\text{\rm for}}\quad n=0,1,\ldots.
\end{equation}
Here, $A_1,\ldots,A_k$ are constants in $K$. We may assume that none of them is zero, since otherwise, a little bit of Galois theory shows that 
the integer sequence $\{u_n\}_{n\ge 0}$ satisfies a linear recurrence of a smaller order. 

We remark in passing that there is no real obstruction in reducing to the case of the simple roots. 
Indeed, let $D\in\N$ be a common denominator of all the coefficients of all the polynomials $A_i(X)$ 
for $i=1,\ldots,m$. That is, the coefficients of each $DA_i$ are algebraic integers.  Then 
$$
Du_n=\sum_{i=1}^m DA_i(0) \alpha_i^n+\sum_{i=1}^m D(A_i(n)-A_i(0))\alpha_i^n.
$$
If $n\in \cN_u$, then $n\mid Du_n$. Since certainly $n$ divides\footnote{Here, for two algebraic integers $\alpha$ and $\beta$ and a positive integer $m$ we write $\alpha\equiv \beta\pmod m$ to mean that $(\alpha-\beta)/m$ is an algebraic integer. When $\beta=0$ we say that $m$ {\it divides} $\alpha$.}{} the algebraic integer
$$
\sum_{i=1}^m D(A_i(n)-A_i(0))\alpha_i^n,
$$
it follows that $n$ divides $\sum_{i=1}^m DA_i(0)\alpha_i^n$. If this is identically zero (i.e., $A_i(0)=0$ for all $i=1,\ldots,m$), then we are in an instance similar to the instance of the sequence of general term $u_n=n2^n$ for all $n\ge 0$ mentioned above. In this case, $\cN_u$ contains at least a positive proportion of all the positive integers (namely, all $n$ coprime to $D$). Otherwise, we may put
$$
w_n=\sum_{i=0}^m DA_i(0)\alpha_i^n\qquad {\text{\rm for}}\quad n=0,1,\ldots.
$$
A bit of Galois theory shows that $w_n$ is an integer for all $n\ge 0$, and the sequence 
$\{w_n\}_{n\ge 0}$ satisfies a linear recurrence relation of order $\ell:=\#\{1\le i\le m: A_i(0)\ne 0\}$ with integer coefficients, 
which furthermore has only simple roots. Hence, $\cN_u\subseteq \cN_w$, and therefore there is indeed no 
loss of generality when proving upper bounds
in dealing only with linear recurrent sequences with distinct roots.

We put 
\begin{equation}
\label{eq:Delta}
\Delta_u:=\prod_{1\le i<j\le k} (\alpha_i-\alpha_j)^2={\text{\rm disc}}(f_u)
\end{equation}
for the (nonzero) discriminant of the sequence 
$\{u_n\}_{n\ge 0}$, or of the polynomial $f_u(X)$. It is known that $\Delta_u$ is an integer. 
We  also assume that $(u_n)_{n\ge 0}$ is nondegenerate, 
which means that $\alpha_i/\alpha_j$ is not a root of $1$
for any $1\le i<j\le m$. Throughout the remainder of this paper, all linear recurrences have only 
simple roots and are nondegenerate. 

When $k=2,~u_0=0$, $u_1=1$ and $\gcd(a_1,a_2)=1$, the sequence $\{u_n\}_{n\ge 0}$ is  called a {\it Lucas sequence}. The formula~\eqref{eq:un1} of its general term is 
\begin{equation}
\label{eq:unLucas}
u_n=\frac{\alpha_1^n-\alpha_2^n}{\alpha_1-\alpha_2},\qquad {\text{\rm for}}\quad n=0,1,\ldots.
\end{equation}
That is, we can take $A_1=1/(\alpha_1-\alpha_2)$ and $A_2=-1/(\alpha_1-\alpha_2)$ in the formula of the general term~\eqref{eq:un1}.
In the case of a Lucas sequence $(u_n)_{n\ge 0}$, the fine structure 
of the set $\cN_u$ has been described in~\cite{S2} (see the references  
therein and particularly~\cite{GS}).  We also note that divisibility of 
terms of a linear recurrence sequence by arithmetic functions of their 
index have been studied in~\cite{LuShp} (see also~\cite{Luc} for the special
case of Fibonacci numbers). 

For a set $\cA$ and a positive real number $x$ we put $\cA(x)=\cA\cap [1,x]$.  Throughout the paper, we study upper and lower bounds for the number $\#\cN_u(x)$. In particular, we prove that $\cN_u$ is of asymptotic density zero.  

Observe first that if $k=1$, then $u_n=A a_1^n$ holds for all $n\ge 0$ with some integers $A\ne 0$ and $a_1\not\in \{0,\pm 1\}$. Its characteristic polynomial is $f_u(X)=X-a_1$. It is easy to see that in this case $\#\cN_u(x)=O((\log x)^{\omega(|a_1|)})$, where for an integer $m\ge 2$ we use $\omega(m)$ for the number of distinct prime factors of $m$. So, from now on, we assume that $k\ge 2$. 

Note next that for the 
sequence of general term $u_n = 2^{n} -2$ for all $n\ge 0$ having
characteristic polynomial $f_u(X) = (X-1)(X-2)$, Fermat's little theorem implies that
every prime is in $\cN_u$, so that the Prime Number Theorem and estimates for the
distribution of pseudoprimes\footnote{A pseudoprime is a composite number $n$ which divides $2^n-2$.
The paper~\cite{POM} shows that there are few odd pseudoprimes compared with primes, while~\cite{Li}
(unpublished) does the same for even pseudoprimes.}
show that it is possible for the estimate $\#\cN_u(x)=(1+o(1))x/\log x$ to hold as $x\to\infty$. 
However, we  show that $\#\cN_u(x)$ cannot have a larger order of magnitude.

\begin{theorem}
\label{thm:Div}
For each $k\ge 2$, there is a positive constant $c_0(k)$ depending only on $k$ such that if the characteristic polynomial of a nondegenerate  linear recurrence
sequence  $\{u_n\}_{n \ge 0}$ of order $k$ has only simple roots, then the estimate 
$$
\#\cN_u(x)  \le c_0(k)\frac{x}{\log x}
$$
holds for $x$ sufficiently large.
\end{theorem}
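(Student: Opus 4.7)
I would decompose $\cN_u(x)$ into its prime and composite parts, handling each separately. Primes in $\cN_u(x)$ contribute at most $\pi(x)=O(x/\log x)$ by the Prime Number Theorem, which already accounts for the dominant part of the bound. It therefore suffices to show that the composite elements of $\cN_u(x)$ also number $O(x/\log x)$.

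The key ingredient is a standard periodicity lemma: for every prime $p$ outside a finite exceptional set (primes dividing $a_k\Delta_u$, together with those dividing a common denominator of the $A_i$), the sequence $u_n \bmod p$ is purely periodic with period $\tau(p)$ dividing $p^k-1$, and the zero set $T_p := \{0\le a<\tau(p) : u_a \equiv 0 \pmod p\}$ has cardinality at most a constant depending only on $k$. The period bound follows by reducing the roots $\alpha_i$ into the splitting field $\mathbb{F}_{p^k}^\times$ (elements of order dividing $p^k-1$); the bound on $|T_p|$ comes from the theory of zeros of exponential polynomials applied to $n\mapsto \sum_i A_i\alpha_i^n$ modulo $p$.

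Given a composite $n\in\cN_u(x)$, let $p=p(n)$ denote its smallest prime factor, so that $p\le\sqrt{n}\le\sqrt{x}$. Since $\gcd(p,\tau(p))=1$, the combined conditions $p\mid n$ and $n\bmod\tau(p)\in T_p$ restrict $n$ to boundedly many (in terms of $k$) residue classes modulo $p\tau(p)$. Writing $n=pm$ with $m\le x/p$ being $p$-rough, I would apply Brun's sieve for the general bound on such $m$, and Brun--Titchmarsh when $n=pq$ is a semiprime (so $q = n/p$ is a prime lying in an arithmetic progression modulo $\tau(p)$). The Brun--Titchmarsh estimate
\[
\#\{q\le x/p : q\text{ prime},\ q\equiv c\pmod{\tau(p)}\} \ll \frac{x/p}{\phi(\tau(p))\log(x/(p\tau(p)))}
\]
is what yields the crucial $1/\log x$ saving, since $\log(x/(p\tau(p)))\gg\log x$ for $p\le\sqrt{x}$ and $\tau(p)$ not too large.

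The main obstacle is showing that, after summing over $p\le\sqrt{x}$, the resulting series $\sum_p 1/(p\phi(\tau(p)))$ is bounded by a constant depending only on $k$. This requires controlling primes $p$ with anomalously small $\tau(p)$: for each fixed $d$, every prime with $\tau(p)=d$ must divide a nonzero integer (a resultant of $f_u$ with $X^d-1$), so such primes are limited in number, and the remaining typical primes have $\tau(p)$ large enough to make the sum converge. Prime powers $n=p^e$ with $e\ge 2$ contribute only $O(\sqrt{x}/\log x)$ directly, and the case $\omega(n)\ge 3$ is handled by an analogous double sieve argument using the two smallest prime factors of $n$, with a smaller contribution. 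Putting everything together yields the desired bound $\#\cN_u(x)\le c_0(k) x/\log x$ with $c_0(k)$ depending only on $k$.
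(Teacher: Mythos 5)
There are two genuine gaps here. The first is that your key lemma is false for a general order-$k$ recurrence: the number of zeros of $u_n\bmod p$ in one period is \emph{not} bounded by a constant depending only on $k$. Finiteness results for zeros of exponential polynomials are a characteristic-zero phenomenon; over $\mathbb{F}_p$ every ratio $\alpha_i/\alpha_j$ is a root of unity, and the equation $\sum_i A_i\alpha_i^n=0$ can hold on entire residue classes. Already for $k=2$, if $d:=\mathrm{ord}_p(\alpha_1/\alpha_2)$ is small (i.e.\ $p$ is a primitive divisor of some $N_{K/\Q}(\alpha_1^d-\alpha_2^d)$ with $d$ small) while $\mathrm{ord}_p(\alpha_1)\approx p$, then $u_n\equiv0\pmod p$ holds for all $n$ in a fixed class modulo $d$ whenever $-A_2/A_1$ lies in $\langle\alpha_1/\alpha_2\rangle$, giving about $\tau(p)/d$ zeros per period. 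The correct substitute is Lemma~\ref{lem:T} (from~\cite[Theorem~5.11]{EvdPSW}): the zero set has density $O_k(1/T_u(p))$ where $T_u(p)$ is \emph{not} the period and can be small for exceptional primes; those primes must then be excised separately, which is what Lemma~\ref{lem:3} and the set $\cP_{u,1/(k+1)}$ accomplish. Your resultant argument is the right instinct but is aimed at the period $\tau(p)$ rather than at $T_u(p)$, and controlling the period does not control the number of zeros inside it.

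The second gap is structural: anchoring at the \emph{smallest} prime factor cannot produce the factor $1/\log x$ for general composites. If $p(n)=2$, the conditions ``$2\mid n$ and $n$ lies in boundedly many classes modulo $2\tau(2)$'' are met by a positive proportion of all integers, and sieving the $2$-rough cofactor $m=n/2$ saves only a bounded factor; the term $p=2$ in your sum is already of order $x$. Brun--Titchmarsh rescues only the semiprime case, where the cofactor is itself prime. The paper avoids this by anchoring at the \emph{largest} prime factor $P(n)$: integers with $P(n)\le y:=x^{1/\log\log x}$ number $\Psi(x,y)=o(x/\log x)$ by Lemma~\ref{lem:Smooth}; those with $P(n)=p>y$ and $p\in\cP_{u,1/(k+1)}$ are few by Lemma~\ref{lem:3}; and for the rest one applies Lemma~\ref{lem:T} to the subsequence $w_\ell=u_{p\ell}$ (checking $T_w(p)=T_u(p)\ge p^{1/(k+1)}$) to get at most $c_2(k)\bigl(x/p^{1+1/(k+1)}+1\bigr)$ choices of $m=n/p$ for each fixed $p$, whose sum over $p\le x$ gives the main term $c_2(k)\pi(x)$. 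You would need to reorganize your argument along these lines for the non-semiprime composites.
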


In case of a Lucas sequence, we have a better bound. To simplify notations, for a posititive integer $\ell$ we define $\log_\ell x$ iteratively as $\log_1 x:=\max\{\log x, 1\}$ and for $\ell>1$ as $\log_\ell x:=\log_1 (\log_{\ell-1} x)$. When $\ell=1$ we omit the index but understand that all logarithms are $\ge 1$.
 Let
\begin{equation}
\label{eq:Lofx}
L(x): =\exp\left(\sqrt{\log x \log_2 x}\right).
\end{equation}

\begin{theorem}
\label{thm:2}
Assume that $\{u_n\}_{n\ge 0}$ is a Lucas sequence. Then the inequality
\begin{equation}
\label{eq:upperJ2}
\#\cN_u(x)\le \frac{x}{L(x)^{1+o(1)}}
\end{equation}
holds as $x\to\infty$.
\end{theorem}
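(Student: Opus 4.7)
The plan is to mirror, for Lucas sequences, the smooth-number approach used in estimating pseudoprimes, relying on a classical structural fact. For each prime $p\nmid 2\Delta_u$, the rank of apparition $z(p)$ (the smallest positive integer $m$ with $p\mid u_m$) satisfies $z(p)\mid p-\epsilon_p$ with $\epsilon_p=\left(\frac{\Delta_u}{p}\right)\in\{\pm 1\}$, and $p\mid u_n$ is equivalent to $z(p)\mid n$. Hence if $n\in\cN_u$ then $z(p)\mid n$ for every prime $p\mid n$ outside a fixed finite set.

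I would set $y:=L(x)^{1/2}$ and split $\cN_u(x)$ into $y$-smooth integers and integers with largest prime factor exceeding $y$. For the $y$-smooth part, this choice of $y$ yields $u=\log x/\log y=2\sqrt{(\log x)/\log_2 x}$ and hence $u\log u=(1+o(1))\sqrt{\log x\,\log_2 x}$, so the Canfield--Erd\H{o}s--Pomerance smooth-number estimate already gives $\Psi(x,y)=x/L(x)^{1+o(1)}$, which matches the target bound.

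For the rough part, if $p=P(n)>y$ then $z(p)\mid n$; since $z(p)\le p+1$ and $p$ is prime, $\gcd(p,z(p))=1$ for large $p$, so $p\cdot z(p)\mid n$. Writing $d=z(p)$, the count of $n\le x$ divisible by $pd$ is at most $x/(pd)$, so the rough contribution is bounded by
\begin{equation*}
x\sum_{d\ge 2}\frac{1}{d}\sum_{\substack{p>y\\ z(p)=d}}\frac{1}{p}.
\end{equation*}
The Binet formula gives $|u_d|\le C_1^{\,d}$ and hence $\omega(u_d)=O(d)$, which controls the number of primes of each rank. Combining this with the arithmetic progression constraint $p\equiv\epsilon_p\pmod d$ and partitioning the ranges $d\le y$ and $d>y$ is intended to bring the double sum below $L(x)^{-1+o(1)}$.

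The main obstacle will be precisely this rough-part estimate, most delicately in the regime where $z(p)$ is much smaller than $p$: a local application of $\omega(u_d)=O(d)$ restricted to the single prime $P(n)$ is too wasteful, and a naive term-by-term bound diverges. To close the gap, one should apply the condition $z(q)\mid n$ \emph{simultaneously} for all large prime factors $q\mid n$, and sieve out the integers forced by these joint divisibilities into a far sparser set, in the spirit of Erd\H{o}s--Pomerance-style arguments for pseudoprimes. Successfully balancing the two ranges of $d$ against the choice $y=L(x)^{1/2}$ is the crux of the proof.
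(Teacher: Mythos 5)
Your smooth/rough split and the observation that $p\,z_u(p)\mid n$ for $p=P(n)$ large are exactly the paper's starting point, and your smooth-part estimate is correct. But the rough part, which you yourself flag as unresolved, contains a genuine gap, and the fix you gesture at (imposing $z_u(q)\mid n$ simultaneously for all large prime factors $q$ of $n$) is not the mechanism that closes it. With only the divisibility bound $x/(p\,z_u(p))$ per prime, the best one can extract from Lemma~\ref{lem:3} (equivalently from your $\omega(u_d)=O(d)$, which gives $O(D^2)$ primes with $z_u(p)\le D$) is $\sum_{p\in[2^j,2^{j+1})}1/(p\,z_u(p))\ll 2^{-j/2}$: the worst case is $z_u(p)\asymp\sqrt p$, where there are about $p$ such primes up to $p$ each contributing $p^{-3/2}$. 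So the rough contribution is only $O\bigl(x\,y^{-1/2}\bigr)$, and optimizing $y$ against the smooth part $\Psi(x,y)$ lands at $y=L(x)$ and a final bound of $x/L(x)^{1/2+o(1)}$ --- short of the claimed exponent $1$. No rebalancing of your two ranges of $d$ repairs this.

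The missing idea is a \emph{second} use of smoothness inside the rough part: if $P(n)=p$ and $p\,z_u(p)\mid n$, then the cofactor is $p$-smooth, so the count of such $n\le x$ is $\Psi\bigl(x/(p\,z_u(p)),\,p\bigr)$, not merely $x/(p\,z_u(p))$. The paper therefore uses a three-way split: $P(n)\le L(x)^{1/2}$ (your smooth part), $P(n)\ge L(x)^3$ (where the trivial $x/(p\,z_u(p))$ bound plus Lemma~\ref{lem:3} suffices), and the middle range $P(n)=p=L(x)^{a}$ with $a\in[1/2,3)$, where Lemma~\ref{lem:Smooth} gives $\Psi\bigl(x/(p\,z_u(p)),p\bigr)\le x/\bigl(p\,z_u(p)L(x)^{1/(2a)+o(1)}\bigr)$. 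Combining this saving of $L(x)^{1/(2a)}$ with the dyadic estimate $\sum_{p\asymp L(x)^a}1/(p\,z_u(p))\ll L(x)^{-a/2}$ yields a total exponent $a/2+1/(2a)\ge 1$, with equality at $a=1$ --- which is precisely why the theorem's exponent is $1$ and cannot be reached without the cofactor-smoothness step. You would need to add this ingredient (and the attendant dyadic bookkeeping in both $p$ and $z_u(p)$) to complete the argument.
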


It follows from a result of Somer~\cite[Theorem~8]{Som} 
that $\cN_u$ is finite if and only if $\Delta_u=1$, and
in this case $\cN_u=\{1\}$.

For Lucas sequences  with $a_2 =\pm 1$,  we also have a 
rather strong lower bound on $\#\cN_u(x)$. Our result depends on 
the current knowledge of the distribution of {\it $y$-smooth} values of 
$p^2-1$ for primes $p$, that is values of $p^2-1$ which do not  have 
prime divisors exceeding $y$. We use  $\Pi(x,y)$ to denote the number of primes $p\le x$
for which $p^2-1$ is  $y$-smooth.
 Since the numbers $p^2-1$ with $p$ prime are likely to behave as ``random'' integers from the point of view of the size of their prime factors,
it seems reasonable to expect that behavior of  
$\Pi(x,y)$ resembles the behavior of the counting function 
for smooth integers. We record this  in a very relaxed 
form of the assumption that for some fixed 
real $v \ge  1$ we have 
\begin{equation}
\label{eq:asymp}
\Pi(y^v,y) \ge  y^{v+o(1)}
\end{equation}
as $y \to \infty$.
In fact, a general result from~\cite[Theorem~1.2]{DMT}
implies that~\eqref{eq:asymp} holds with any $v \in [1,4/3)$.

\begin{theorem}
\label{thm:lowerboundJ}
There is a set of integers $\cL$ such that $\cL\subset\cN_u$
for any Lucas sequence $u$ with $a_2=\pm1$, 
and such that if~\eqref{eq:asymp} holds with some $v>1$, we have 
$$
\#\cN_u(x) \ge \#\cL(x)\ge x^{\vartheta+o(1)}
$$
as $x\to\infty$, where
$$
\vartheta: = 1 - 1/v.
$$
\end{theorem}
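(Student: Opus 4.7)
The plan is to construct $\cL$ as subset products of primes having smooth values of $p^2-1$, multiplied by a fixed smooth base that absorbs all the relevant ranks of apparition. For a parameter $y$, let
$$
\cS(y):=\{p \text{ prime}: y<p\le y^v \text{ and } p^2-1 \text{ is } y\text{-smooth}\};
$$
by the hypothesis~\eqref{eq:asymp}, $\#\cS(y)\ge y^{v+o(1)}$. Put
$$
L_y:=\prod_{q\le y}q^{\fl{2v\log y/\log q}},
$$
so that every $y$-smooth integer up to $y^{2v}$ divides $L_y$; in particular $p^2-1\mid L_y$ for each $p\in\cS(y)$, and $q^2-1\mid L_y$ for each prime $q\le y$. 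Chebyshev's bounds give $\log L_y=O(y)$. Define
$$
\cL:=\bigcup_{y\ge 3}\Bigl\{L_y\prod_{p\in T}p \;:\;T\subseteq\cS(y)\Bigr\}.
$$

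To verify $\cL\subseteq\cN_u$ for every nondegenerate Lucas sequence $u$ with $a_2=\pm 1$, fix such a $u$ and let $n=L_y\prod_{p\in T}p\in\cL$. Since $\alpha\beta=\mp 1$ is a unit, the rank of apparition $\rho_u(q)$ is defined for every prime $q$. For each $p\in T$ one has $v_p(n)=1$, and either $p\nmid\Delta_u$ so that $\rho_u(p)\mid p^2-1\mid L_y\mid n$, or $p\mid\Delta_u$ so that $\rho_u(p)=p\mid n$; in either case $p\mid u_n$. For each odd prime $q\le y$ we have $q\mid L_y$ and $q^2-1\mid L_y\mid n$, and the lifting-the-exponent identity applied in $\Q(\sqrt{\Delta_u})$---splitting the analysis according to whether $q\nmid\Delta_u$ (so $\rho_u(q)\mid q^2-1$, $q\nmid\rho_u(q)$) or $q\mid\Delta_u$ (so $v_q(\alpha-\beta)>0$)---produces $v_q(u_n)\ge v_q(n)$. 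Finally the choice $y\ge 3$ forces $6\mid L_y\mid n$; since $\rho_u(2)\in\{2,3\}$ for any Lucas $u$ with $a_2=\pm 1$, a short computation distinguishing the parity of $a_1$ yields $v_2(u_n)\ge v_2(n)$ as well. Combining, $n\mid u_n$.

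For the counting step, take $y=\fl{c\log x/\log_2 x}$ with a small constant $c>0$, so $\log L_y=O(y)=o(\log x)$. Each $t$-subset $T\subseteq\cS(y)$ yields a distinct element $n_T\in\cL$ (the primes in $T$ exceed $y$ and do not divide $L_y$), with $\log n_T\le\log L_y+vt\log y$; the constraint $n_T\le x$ is satisfied for any $t\le(1-o(1))\log x/(v\log y)$. Choosing $t$ at this maximum, together with $\#\cS(y)\ge(\log x)^{v+o(1)}$ and $\log y\sim\log_2 x$, gives
$$
\#\cL(x)\;\ge\;\binom{\#\cS(y)}{t}\;\ge\;\Bigl(\frac{\#\cS(y)}{t}\Bigr)^{\!t}\;\ge\;\bigl((\log x)^{v-1+o(1)}\bigr)^{t}\;=\;x^{(v-1)/v+o(1)}\;=\;x^{\vartheta+o(1)}.
$$

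The chief technical obstacle is the uniformity of the divisibility $n\mid u_n$ in $u$: the set $\cL$ must be chosen independently of $u$, yet the prime factors of $\Delta_u$ and the 2-adic behaviour of the sequence both depend on $u$. Verifying the strong valuation inequality $v_q(u_n)\ge v_q(n)$ simultaneously at every small prime $q$ (including bad primes $q\mid\Delta_u$, where one must work in $\Q(\sqrt{\Delta_u})$ and use $\rho_u(q)=q$, and including $q=2$, where one exploits $6\mid n$ and $a_2=\pm 1$) is the delicate technical heart of the argument. Once this uniform valuation estimate is in hand, the subset-counting argument above delivers the bound $\#\cL(x)\ge x^{\vartheta+o(1)}$ unconditionally on the sequence.
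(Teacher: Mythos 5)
Your proof is correct and follows essentially the same Erd\H{o}s-style construction as the paper: a fixed $y$-smooth base times squarefree products of primes $p\in(y,y^v]$ with $p^2-1$ $y$-smooth, membership in $\cN_u$ verified through the rank of apparition via $z_u(q^k)\mid z_u(q)q^{k-1}$, and the count via $\binom{\#\cS(y)}{t}\ge(\#\cS(y)/t)^t$. The one genuine (minor) improvement is your choice of base $L_y=\prod_{q\le y}q^{\fl{2v\log y/\log q}}$ in place of the paper's $2\,\lcm[m:m\le y]$: since $L_y$ is divisible by every $y$-smooth integer up to $y^{2v}$, you avoid the paper's extra step of discarding the $O(y^{v-1/2})$ primes $p$ for which $p^2-1$ is divisible by a proper prime power exceeding $y$, at no cost since $\log L_y=O(y)$ still.
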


In particular, since as we have already mentioned, any value of $v < 4/3$
is admissible, we can take
$$
\vartheta = 1/4.
$$
Furthermore, since~\eqref{eq:asymp}  is expected to hold for
{\it any} $v>1$, it is very likely that the bound of 
Theorem~\ref{thm:lowerboundJ}
holds with $\vartheta = 1$. 

Finally, we record a lower bound on $\#\cN(x)$ when $a_2\ne \pm 1$ but $\Delta_u\ne 1$.

\begin{theorem}
\label{thm:lucasgen}
Let $\{u_n\}_{n\ge 0}$ be any Lucas sequence with $\Delta_u\ne 1$. Then there exist positive constants $c_1$ and $x_0$ depending on the sequence such that for $x>x_0$ we have  
$$
\#\cN_u(x)>\exp(c_1(\log_2 x)^2).
$$
\end{theorem}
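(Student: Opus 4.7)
The plan is to exploit a multiplicative closure property of $\cN_u$ together with a subset-counting argument. For a prime $p$ not dividing $2a_2\Delta_u$, write $\rho(p)$ for its rank of apparition in $\{u_n\}$, i.e., the least $m\ge1$ with $p\mid u_m$. The key mechanism is: if $m\in\cN_u$ and $S$ is a set of distinct primes coprime to $2ma_2\Delta_u$ with $\rho(p)\mid m$ for every $p\in S$, then $m\prod_{p\in S}p\in\cN_u$. This is verified using the standard formula $v_p(u_n)=v_p(u_{\rho(p)})+v_p(n/\rho(p))$ valid for odd $p\nmid\Delta_u$ with $\rho(p)\mid n$, together with the divisibility $u_m\mid u_{mn}$ in a Lucas sequence (so that prime powers $q^{v_q(m)}$ already dividing $u_m$ still divide $u_{mn}$).

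The first step is to construct $m_K\in\cN_u$ of moderate size with $\omega(u_{m_K})\ge c(\log_2 x)^2$ for some positive constant $c$. Since $\Delta_u\ne 1$, Somer's Theorem~8 of~\cite{Som} gives $\cN_u$ infinite; its standard proof is constructive and iterative: starting from $m\in\cN_u$ large enough, a primitive prime divisor $q$ of $u_m$ (which exists by the classical primitive-divisor theorem of Carmichael and its generalization by Bilu, Hanrot and Voutier) satisfies $\rho(q)=m$, so that $mq\in\cN_u$. Iterating $m_{k+1}:=m_k q_{k+1}$ from $m_0\in\cN_u$ yields a sequence in $\cN_u$ with $\tau(m_{k+1})=2\tau(m_k)$. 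Stopping at the largest $K$ with $m_K\le\log x$ (and further enlarging $m_K$ by multiplying in imprimitive prime divisors of $u_{m_k}$ when possible, using the factorization $u_n=\prod_{d\mid n}\Phi_d(\alpha,\beta)$ into binary cyclotomic factors), one argues $\omega(u_{m_K})\ge c(\log_2 x)^2$.

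Given such an $m_K$, set
\[
P:=\{p\text{ prime}:\rho(p)\mid m_K,\ p\nmid 2m_Ka_2\Delta_u\}.
\]
By construction, $|P|\ge\omega(u_{m_K})-O(1)\ge c(\log_2 x)^2$. For any subset $S\subseteq P$, the key mechanism gives $n_S:=m_K\prod_{p\in S}p\in\cN_u$, and distinct subsets yield distinct $n_S$. Each $p\in P$ divides $u_{m_K}$, so $p\le C|\alpha_1|^{m_K}$ for some constant $C$; hence the size constraint $n_S\le x$ becomes $|S|\le(\log(x/m_K))/(m_K\log|\alpha_1|+O(1))$. With $m_K$ chosen of order $\log x$ or less, this upper bound on $|S|$ exceeds $|P|$, so all $2^{|P|}$ subsets are admissible and
\[
\#\cN_u(x)\ge 2^{|P|}\ge\exp(c_1(\log_2 x)^2),
\]
which is the claimed bound.

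The main obstacle is the simultaneous control of $m_K$ (small enough to keep $|\alpha_1|^{m_K}\le x$) and of $\omega(u_{m_K})$ (large enough to reach $(\log_2 x)^2$). The naive primitive-divisor iteration forces $m_{k+1}\ge m_k^2-m_k$, since a primitive prime of $u_{m_k}$ is at least $m_k-1$; this permits only $K=O(\log_3 x)$ iterations before $m_K$ exceeds $\log x$, giving only $\tau(m_K)=(\log_2 x)^{O(1)}$ from the doublings alone. Closing the gap from $(\log_2 x)^{O(1)}$ to $(\log_2 x)^2$ requires exploiting the fact that $u_d$ for many divisors $d$ of $m_K$ contributes further (imprimitive) prime divisors, or using lower bounds on $\omega(u_n)$ along the constructed sequence via the cyclotomic factorization. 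This careful bookkeeping, identifying a specific $m_K\in\cN_u$ that realizes the required bound, is the technical heart of the proof.
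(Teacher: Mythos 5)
Your outer skeleton (a multiplicative closure property of $\cN_u$ plus a counting argument over products of primes whose rank of apparition divides a fixed $m\in\cN_u$) is the right kind of idea, and your ``key mechanism'' is correct as stated. But the proof has a genuine gap, which you yourself flag in the last paragraph: you never produce an $m_K\in\cN_u$ with $\omega(u_{m_K})\gg(\log_2 x)^2$, and the iteration you sketch cannot produce one. A primitive prime of $u_{m_k}$ is $\ge m_k-1$, so $m_{k+1}\ge m_k(m_k-1)$, which caps the number of iterations at $O(\log_3 x)$ and yields only $\tau(m_K)\asymp\log_2 x$, hence only about $\log_2 x$ primes in your set $P$. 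Moreover your size bookkeeping is quantitatively wrong: each $p\in P$ can be as large as $|u_{m_K}|\approx|\alpha_1|^{m_K}$, so $n_S\le x$ forces $|S|\ll(\log x)/m_K$, which for $m_K$ of order $\log x$ is $O(1)$, not $\ge|P|$. To make the subset-counting route work you would need simultaneously $\tau(m_K)\gg(\log_2 x)^2$ and $m_K\ll\log x/(\log_2 x)^2$ with $m_K\in\cN_u$ and all ranks dividing $m_K$; nothing in your argument supplies such an $m_K$.

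The paper closes exactly this gap by changing what is being counted. Since $\Delta_u\equiv0,1\pmod 4$ and $\Delta_u\ne0,1$, one has $|\Delta_u|>1$; pick a prime $r\mid\Delta_u$, so that $r^j\in\cN_u$ for \emph{every} $j\ge0$. Take $m=r^{k+4}$ with $k\asymp\log_2 x/(2\log r)$, so $m\approx\sqrt{\log x}$. The Bilu--Hanrot--Voutier theorem gives a primitive prime for each divisor $r^j\ge31$ of $m$, hence $k$ distinct primes $p_1,\ldots,p_k$ dividing $u_m$ with ranks dividing $m$ --- only about $\log_2 x$ primes, not $(\log_2 x)^2$. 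The crucial extra step is to allow \emph{arbitrary exponents}: all numbers $r^{k+4}p_1^{\beta_1}\cdots p_k^{\beta_k}\le x$ lie in $\cN_u$ (your own valuation formula $v_p(u_n)=v_p(u_{\rho(p)})+v_p(n/\rho(p))$ already justifies this, but you only used exponent $1$). Since $\log p_i\ll m\ll\sqrt{\log x}$, each $\beta_i$ ranges over $\gg\sqrt{\log x}/(k\log_2 x)$ values, and the product of these ranges is $\exp\left(\left(\tfrac14+o(1)\right)(\log_2 x)^2/\log r\right)$. So the missing idea is twofold: seed the construction with a power of a prime dividing $\Delta_u$ (which is where the hypothesis $\Delta_u\ne1$ enters), and trade ``many primes to the first power'' for ``few primes to high powers.''
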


Throughout the paper, we use $x$ for a large positive real number. We use the Landau symbol $O$ and the Vinogradov symbol $\ll$ with the usual meaning in analytic number theory. 
The constants implied by them may depend on the sequence $\{u_n\}_{n\ge 0}$, or only on $k$. 
We use $c_0,c_1,\ldots$ for positive constants  which may depend on $\{u_n\}_{n\ge 0}$. 
We label such constants increasingly as they appear in the paper.

\section{Preliminary results}
\label{sec:prelim}

As in the proof of~\cite[Theorem~2.6]{EvdPSW}, put
$$
D_u(x_1,\ldots,x_k):=\det(\alpha_i^{x_j})_{1\le i, j\le k}.
$$
For a prime number $p$ not dividing $a_k$, let $T_u(p)$ be the maximal nonnegative integer $T$ with the property that
$$
p\nmid \prod_{0\le x_2,\ldots,x_k\le T} \max\{1,|N_{\K/\Q}(D_u(0,x_2,\ldots,x_k))|\}.
$$
It is known that such $T$ exists. In the above relation, $x_2,\ldots,x_k$ are integers in $[1,T]$, 
and for an element $\alpha$ of $\K$ we use $N_{\K/\Q}(\alpha)$ for the norm of $\alpha$ over $\Q$. 
Since $\alpha_1,\ldots,\alpha_k$ are algebraic integers in $\K$, it follows that the numbers 
$N_{\K/\Q}(D_u(0,x_2,\ldots,x_k))$ are integers.

Observe that $T_u(p)=0$ if and only if $k=2$ and $p$ is a divisor of $\Delta_u=(\alpha_1-\alpha_2)^2$.

More can be said in the case when $\{u_n\}_{n\ge 0}$ is a Lucas sequence. 
In this case, we have 
$$
|N_{\K/\Q}(D_u(0,x_2))|=|\alpha_2^{x_2}-\alpha_1^{x_2}|^2=|\Delta_u|^2 |u_{x_2}|^2,\qquad x_2=1,2,\ldots.
$$
Thus, if $p$ does not divide the discriminant $\Delta_u=(\alpha_1-\alpha_2)^2=a_1^2+4a_2$ of the sequence $\{u_n\}_{n\ge 0}$, 
then $T_u(p)+1$ is in fact the minimal positive integer $\ell$ such that $p\mid u_\ell$. This is sometimes called 
the {\it index of appearance\/} of $p$ in $\{u_n\}_{n\ge 0}$  and is denoted by $z_u(p)$. The index of appearance $z_u(m)$ can be defined for composite integers $m$ in the same way as above, namely as the minimal positive integer $\ell$ such that $m\mid u_\ell$. 
This exists for all positive integers $m$ coprime to $a_2$, and has the important property that $m\mid u_{n}$ if and only if $z_u(m)\mid n$.
For any $\gamma\in (0,1)$, let 
$$
\cP_{u,\gamma}=\{p~:~T_u(p)<p^{\gamma}\}.
$$

\begin{lem}
\label{lem:3}
For $x^\gamma,y\ge2$, the estimates
$$
\#\{p:T_u(p)\le y\}\ll\frac{y^k}{\log y},\quad
\#\cP_{u,\gamma} (x)\ll \frac{x^{k\gamma}}{\gamma\log x}
$$
hold, where the implied constants depend only on the sequence 
$\{u_n\}_{n\ge 0}$.
\end{lem}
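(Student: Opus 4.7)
The plan is to bound $\#\{p:T_u(p)\le y\}$ by the number of distinct prime divisors of one explicit integer, and then to deduce the bound on $\#\cP_{u,\gamma}(x)$ by choosing $y\asymp x^\gamma$.

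First I would estimate the size of each factor appearing in the defining product. The roots $\alpha_1,\ldots,\alpha_k$ and all their Galois conjugates lie in a disk of some radius $B$ depending only on $\{u_n\}_{n\ge 0}$. Applying the Leibniz formula to the $k\times k$ determinant $\det(\alpha_i^{x_j})$ with $x_1=0$ and $x_2,\ldots,x_k\in[0,y+1]$ yields
$$
|D_u(0,x_2,\ldots,x_k)|\le k!\,B^{(k-1)(y+1)}.
$$
Taking the norm over $\K/\Q$ amounts to multiplying at most $[\K:\Q]$ conjugate determinants of the same shape, so
$$
|N_{\K/\Q}(D_u(0,x_2,\ldots,x_k))|\le C^y
$$
for some constant $C$ depending only on the sequence.

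Next, set
$$
P(y):=\prod_{0\le x_2,\ldots,x_k\le y+1}\max\{1,|N_{\K/\Q}(D_u(0,x_2,\ldots,x_k))|\}.
$$
This is a positive integer with $(y+2)^{k-1}$ factors each at most $C^y$, so $P(y)\le\exp(O(y^k))$. By the very definition of $T_u(p)$, if $T_u(p)\le y$ then $p$ divides the product over $[0,T_u(p)+1]^{k-1}$, which in turn divides $P(y)$. Hence every such $p$ divides $P(y)$, and
$$
\#\{p:T_u(p)\le y\}\le \omega(P(y))\ll\frac{\log P(y)}{\log\log P(y)}\ll\frac{y^k}{\log y},
$$
by the classical estimate $\omega(N)\ll\log N/\log\log N$ (the bounded range of $y$ can be absorbed into the implied constant since $y\ge 2$). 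This establishes the first inequality.

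For the second, any $p\in\cP_{u,\gamma}(x)$ satisfies $T_u(p)<p^\gamma\le x^\gamma$, so $p$ is counted above with $y=\lceil x^\gamma\rceil\ge 2$. Since $\log\lceil x^\gamma\rceil=\gamma\log x+O(1)$, the first inequality gives
$$
\#\cP_{u,\gamma}(x)\ll\frac{x^{k\gamma}}{\gamma\log x},
$$
as claimed. The only step requiring mild attention is the use of $\omega(N)\ll\log N/\log\log N$, which needs $P(y)$ to be at least moderately large; for small $y$ the desired bound is trivial, so no real obstacle appears. The rest of the argument is a direct size-versus-divisibility estimate, drawing on the integrality of $N_{\K/\Q}(D_u(0,x_2,\ldots,x_k))$.
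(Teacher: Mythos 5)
Your proposal is correct and follows essentially the same route as the paper: both arguments show that every prime with $T_u(p)\le y$ divides a single integer of size $\exp(O(y^k))$ (a product of $O(y^{k-1})$ norms, each $\exp(O(y))$), and then bound the number of such primes; the paper runs the Chebyshev-type argument $\theta(p_n)\ll y^k$ explicitly, while you invoke the equivalent classical bound $\omega(N)\ll\log N/\log\log N$. The deduction of the second estimate by taking $y\asymp x^\gamma$ also matches the paper.
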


\begin{proof}
It is clear that the second inequality follows immediately from the
first with $y=x^\gamma$, so we prove only the first one.  Suppose that
$T_u(p)\le y$.
In particular, there exists a choice of integers $x_2,\ldots,x_k$ all 
in $[1,y+1]$ such that
$$
p\mid \max\{1,|N_{\K/\Q}(D_u(0,x_2,\ldots,x_k))|\}.
$$
This argument shows that
\begin{equation}
\label{eq:TTT}
\prod_{T_u(p)\le y} p\mid \prod_{1\le x_2,\ldots,x_k\le y+1}   
\max\{1,|N_{\K/\Q}(D_u(0,x_2,\ldots,x_k))|\}.
\end{equation}
There are at most $(y+1)^{k-1}=O(y^{k-1})$ possibilities for the 
$(k-1)$-tuple $(x_2,\ldots,x_k)$. 
For each one of these $(k-1)$-tuples, we have that
$$
|N_{\K/\Q}(D_u(0,x_2,\ldots,x_k))|=\exp(O(y)).
$$
Hence, the right hand side in~\eqref{eq:TTT} is of size $\exp(O(y^{k}))$.
Taking logarithms in the inequality implied by~\eqref{eq:TTT}, we get that
\begin{equation*}
\sum_{T_u(p)\le y}\log p =O(y^{k}).
\end{equation*}
If there are a total of $n$ primes involved in this sum and if $p_i$ denotes
the $i$th prime, then 
$$
\sum_{i=1}^n\log p_i=O(y^k),
$$
so that in the language of the prime number theorem, $\theta(p_n)\ll y^k$.
It follows that $p_n\ll y^k$ and $n\ll y^k/(k\log y)$, which is what
we wanted to prove.
\end{proof}

The parameter $T_u(p)$ is useful to bound the number of solutions $n\in [1,x]$ of the congruence $u_n\equiv 0\pmod p$. For example, the following 
is~\cite[Theorem~5.11]{EvdPSW}.

\begin{lemma}
\label{lem:T}
There exists a constant $c_2(k)$ depending only on $k$ with the following property. 
Suppose that $\{u_n\}_{n\ge 0}$ is a linearly recurrent sequence of order $k$ satisfying recurrence 
\eqref{eq:LRS}. Suppose that $p$ is a prime coprime to $a_k\Delta_u$. 
Assume that there exists a positive integer $s$ such that $u_{s}$ is coprime to $p$. 
Then for any real $x\ge 1$ the number of solutions $R(x,p)$ of  the congruence
$$
u_n\equiv 0\pmod p\qquad {\text{with}}\quad 1\le n\le x
$$
satisfies the bound
$$
R_u(x,p)\le c_2(k)\left(\frac{x}{T_u(p)}+1\right).
$$
\end{lemma}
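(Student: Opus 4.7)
The plan is to reduce the problem to bounding the number of zeros of $u_n \bmod p$ in any interval of length $T_u(p)$, and to show that in any such interval there are at most $k-1$ such zeros. Granting this, one partitions $[1,x]$ into $\lceil x/T_u(p)\rceil$ intervals of length $T_u(p)$ and obtains $R_u(x,p)\le (k-1)(x/T_u(p)+1)$, so one may take $c_2(k)=k-1$ (or $k$, to absorb edge effects).

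To prove the local bound, I would argue by contradiction. Suppose there exist $k$ indices $n_1<n_2<\cdots<n_k$ with $p\mid u_{n_j}$ for all $j$ and $n_k-n_1\le T_u(p)$. Set $x_j:=n_j-n_1$ (so $x_1=0$ and $x_j\le T_u(p)$) and $A_i':=A_i\alpha_i^{n_1}\in \K$. Using the representation~\eqref{eq:un1}, the hypotheses $u_{n_j}\equiv 0 \pmod p$ become the linear system
$$
\sum_{i=1}^k A_i'\alpha_i^{x_j}\equiv 0\pmod p,\qquad j=1,\ldots,k,
$$
in $O_\K$ (after clearing a fixed rational denominator $C$ of the $A_i$'s, which depends only on $\{u_n\}$ and whose prime divisors can be absorbed into the finitely many exceptional primes dividing $a_k\Delta_u$ by enlarging the excluded set—or noting that $C$ can be taken to be a power of $\Delta_u$). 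Multiplying by the adjugate of the matrix $V=(\alpha_i^{x_j})_{j,i}$ and noting $\det V = D_u(0,x_2,\ldots,x_k)$, I obtain
$$
D_u(0,x_2,\ldots,x_k)\cdot A_i'\equiv 0\pmod p\quad\text{in }O_\K,\qquad i=1,\ldots,k.
$$

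From here, for every $n\ge n_1$ I expand
$$
D_u(0,x_2,\ldots,x_k)\,u_n=\sum_{i=1}^k\bigl(D_u(0,x_2,\ldots,x_k)\,A_i'\bigr)\alpha_i^{n-n_1}\in pO_\K,
$$
so $p$ divides $D_u(0,x_2,\ldots,x_k)\,u_n$ in $O_\K$. Applying all Galois embeddings of $\K$ and taking the product yields $p^d\mid N_{\K/\Q}(D_u(0,x_2,\ldots,x_k))\cdot u_n^d$ in $\Z$, where $d=[\K:\Q]$. If one had $p\nmid N_{\K/\Q}(D_u(0,x_2,\ldots,x_k))$, then $p\mid u_n$ for every $n\ge n_1$; and since $p\nmid a_k$, the recurrence~\eqref{eq:LRS} can be run backwards so that $p\mid u_n$ for all $n\ge 0$, contradicting the assumption that $u_s$ is coprime to $p$. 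Hence $p\mid N_{\K/\Q}(D_u(0,x_2,\ldots,x_k))$, and this norm is nonzero since the exponents $0,x_2,\ldots,x_k$ are pairwise distinct (a generalized Vandermonde determinant). But $0\le x_j\le T_u(p)$ for every $j$, which directly contradicts the defining property of $T_u(p)$.

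The only slightly delicate point is the denominator issue in the transition from the equations in $\K$ to divisibility statements in $O_\K$: the constants $A_i$ are algebraic, not algebraic-integer, and one must verify that the cleared denominator $C$ can be chosen to depend only on $\{u_n\}_{n\ge 0}$ (via $\Delta_u$) rather than on $p$, and then observe that primes dividing $C$ are already excluded by the hypothesis $p\nmid a_k\Delta_u$. Apart from that, the argument is bookkeeping. I would expect the main obstacle to be presenting the norm/Galois step cleanly so the $p$-adic valuation comparison that turns $p\mid D_u\cdot u_n$ in $O_\K$ into $p\mid u_n$ in $\Z$ is transparent.
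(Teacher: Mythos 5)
The paper offers no internal proof of this lemma; it is imported verbatim as \cite[Theorem~5.11]{EvdPSW}. Judged on its own terms, your argument has a genuine gap, and it sits exactly where the proof in that reference has to work hardest. Your reduction to a local bound per window of length $T_u(p)$, the clearing of denominators with $C=\Delta_u$ (legitimate: $\Delta_u A_i$ is an algebraic integer and $p\nmid\Delta_u$ by hypothesis), the adjugate step, and the norm/Galois step are all sound. What fails is the assertion that $N_{\K/\Q}(D_u(0,x_2,\ldots,x_k))\ne 0$ merely because the exponents $0,x_2,\ldots,x_k$ are distinct. Generalized Vandermonde determinants with distinct exponents can vanish for perfectly nondegenerate data once $k\ge 3$: for the recurrence with characteristic polynomial $(X-1)(X-2)(X+3)$ (simple roots, no ratio of roots a root of unity), regarding the third root as a variable $\beta$ one computes $D_u(0,1,3)=\beta^3-7\beta+6=(\beta-1)(\beta-2)(\beta+3)$, which vanishes at $\beta=-3$. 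When $D_u(0,x_2,\ldots,x_k)=0$ the adjugate identity is vacuous, you cannot conclude $p\mid u_n$ for $n\ge n_1$, and the intended contradiction with the definition of $T_u(p)$ evaporates precisely because of the $\max\{1,\cdot\}$ built into that definition. So your claim of at most $k-1$ zeros per window is unproved for $k\ge 3$ (for $k=2$ nondegeneracy does force $D_u(0,x_2)=\alpha_2^{x_2}-\alpha_1^{x_2}\ne 0$, and there your argument is complete).

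To close the gap one must deal with windows in which \emph{every} $k$-subset of the zeros yields a vanishing determinant. In that case the $N\times k$ matrix $(\alpha_i^{n_j-n_1})$ has rank less than $k$ over $\C$, so some nontrivial exponential polynomial $\sum_{i=1}^k c_i\alpha_i^m$ vanishes at all $N$ shifted zeros, and bounding $N$ then requires a quantitative zero-counting theorem for nondegenerate polynomial--exponential equations (results of Schlickewei--Schmidt type, cf.~\cite{SS}). This extra ingredient is why the constant $c_2(k)$ in \cite{EvdPSW} is a large function of $k$ rather than $k-1$. Without it, your proof establishes the lemma only for $k=2$.
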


When $\{u_n\}_{n\ge 0}$ is a Lucas sequence, we put 
$$
\cQ_{u,\gamma}=\{p~:~z_u(p)\le p^{\gamma}\}.
$$

The remarks preceding Lemma~\ref{lem:3} show that $\#\cQ_{u,\gamma}(x)=\#\cP _{u,\gamma}(x)+O(1)$. Hence, Lemma~\ref{lem:3} implies the following result.

\begin{lem}
\label{lem:4}
For $x>1$, the estimate 
$$
\#\cQ_{u,\gamma} (x)\ll \frac{x^{2\gamma}}{\log x}
$$
holds, where the implied constant depends only on the sequence $\{u_n\}_{n\ge 0}$.
\end{lem}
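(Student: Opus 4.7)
The plan is to deduce this estimate directly from Lemma~\ref{lem:3} specialized to the Lucas case $k=2$, using the identification $z_u(p)=T_u(p)+1$ recorded in the discussion preceding Lemma~\ref{lem:3}. That identification is valid for every prime $p$ not dividing the discriminant $\Delta_u$, and the primes $p\mid\Delta_u$ form a finite set whose size depends only on the sequence $\{u_n\}_{n\ge 0}$.

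Concretely, I would argue as follows. Suppose $p\in\cQ_{u,\gamma}(x)$ and $p\nmid\Delta_u$. Then
$$
T_u(p)=z_u(p)-1\le p^{\gamma}-1<p^{\gamma},
$$
so $p\in\cP_{u,\gamma}(x)$. Consequently
$$
\cQ_{u,\gamma}(x)\subseteq \cP_{u,\gamma}(x)\cup\{p:p\mid\Delta_u\},
$$
and hence $\#\cQ_{u,\gamma}(x)\le\#\cP_{u,\gamma}(x)+O(1)$, the error term depending only on $\omega(|\Delta_u|)$. Applying the second inequality of Lemma~\ref{lem:3} with $k=2$ then gives
$$
\#\cP_{u,\gamma}(x)\ll\frac{x^{2\gamma}}{\gamma\log x}\ll\frac{x^{2\gamma}}{\log x},
$$
where the implicit constants depend only on the sequence (and, via the factor $1/\gamma$, possibly on $\gamma$). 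Combining the two displays yields the claimed bound.

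There is no real obstacle here: Lemma~\ref{lem:4} is essentially a repackaging of Lemma~\ref{lem:3} in the language of the index of appearance $z_u(p)$ rather than the auxiliary quantity $T_u(p)$. The only point that needs to be checked is that switching between these two parameters costs only an additive term uniformly bounded in $x$, and this reduces to the finiteness of the set of prime divisors of $\Delta_u$.
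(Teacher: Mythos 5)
Your argument is correct and is essentially the paper's own proof: the paper likewise notes that for $p\nmid\Delta_u$ one has $z_u(p)=T_u(p)+1$, so $\#\cQ_{u,\gamma}(x)=\#\cP_{u,\gamma}(x)+O(1)$, and then invokes Lemma~\ref{lem:3} with $k=2$. Your parenthetical remark about the harmless $1/\gamma$ factor is a fair (and slightly more careful) reading of the implied constant than the paper's own statement.
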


As usual, we denote by $\Psi(x,y)$  the number of integers $n\le x$ with $P(n)\le y$.
By~\cite[Corollary to Theorem~3.1]{CEP}, we have the following 
well-known result. 

\begin{lem}
\label{lem:Smooth}
For $x\ge y >1$, the estimate 
$$
\Psi(x,y) = x\exp(-(1+o(1))v\log v)
$$
uniformly in the range $y>(\log x)^2$ as long as $v\to\infty$, where 
$$
v:=(\log x)/(\log y).
$$
\end{lem}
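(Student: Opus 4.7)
The plan is to prove matching upper and lower bounds $\log(\Psi(x,y)/x)=-(1+o(1))v\log v$ via Rankin's method and a saddle-point/inductive construction, respectively. Note that the hypotheses $y>(\log x)^2$ and $v=(\log x)/\log y\to\infty$ force $\log y=(\log x)^{o(1)}$, so $y\to\infty$ and classical forms of the prime number theorem are freely available.

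For the upper bound, I would apply Rankin's trick: for any $\sigma\in(0,1)$,
$$
\Psi(x,y)\le\sum_{P(n)\le y}(x/n)^\sigma=x^\sigma\prod_{p\le y}(1-p^{-\sigma})^{-1}.
$$
Writing $\sigma=1-\alpha/\log y$ and evaluating the Euler product by partial summation against PNT gives
$$
\sum_{p\le y}\log(1-p^{-\sigma})^{-1}=(1+o(1))\int_2^y\frac{t^{-\sigma}\,dt}{\log t}=(1+o(1))\frac{e^\alpha-1}{\alpha},
$$
so that
$$
\log\Psi(x,y)\le\log x-\alpha v+(1+o(1))\frac{e^\alpha-1}{\alpha}.
$$
The optimal choice is the saddle point $\alpha_\ast\sim\log v+\log_2 v$ satisfying $e^{\alpha_\ast}/\alpha_\ast\approx v$; substitution yields $\log\Psi(x,y)\le\log x-(1+o(1))v\log v$.

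For the lower bound, I would invoke the classical asymptotic $\log\rho(v)=-(1+o(1))v\log v$ for Dickman's function together with the Hildebrand--Tenenbaum saddle-point estimate $\Psi(x,y)\ge x\rho(v)^{1+o(1)}$ valid in the relevant range. A more elementary alternative is to proceed by induction on $v$, iterating the largest-prime-factor identity $\Psi(x,y)=1+\sum_{p\le y}\Psi(x/p,p)$: one starts from a base case $v=v_0$ (a large constant) where de Bruijn's $\Psi(x,y)\sim x\rho(v)$ applies, and at each recursion step loses only $\log_2 y+\log v$ in the exponent; the cumulative defect telescopes to $o(v\log v)$ because $\log_2 y=o(v\log v)$ under the hypotheses.

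The main obstacle is matching the constant $1$ in $(1+o(1))$ on both sides uniformly. The upper bound is standard once the saddle-point optimization of $\alpha$ is carried out. The lower bound is genuinely delicate: a one-shot construction taking products of $\lfloor v\rfloor$ distinct primes from $(y/2,y]$ incurs an additional $v\log_2 y$ loss in the exponent, and $v\log_2 y$ is comparable to $v\log v$ in the intermediate regime $y\approx\exp(\sqrt{\log x\log_2 x})$. This forces one to use either the saddle-point formalism mirroring Rankin's bound or the telescoping inductive construction above, with the hypothesis $y>(\log x)^2$ playing the essential role of guaranteeing enough PNT uniformity throughout.
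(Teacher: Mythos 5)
First, a point of comparison: the paper does not prove this lemma at all --- it is quoted verbatim from \cite[Corollary to Theorem~3.1]{CEP} --- so your proposal is not competing with an argument in the text but reconstructing the proof of a cited black box. Your overall strategy (Rankin's method for the upper bound; a saddle-point or telescoping combinatorial construction for the lower bound) is indeed how the result is proved in the literature, and you correctly identify the central difficulty, namely the extra $v\log_2 y$ in the exponent lost by the naive ``product of $\lfloor v\rfloor$ primes near $y$'' construction.

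There is, however, a concrete error in how you propose to close the argument: the claim that the hypotheses $y>(\log x)^2$ and $v\to\infty$ force $\log_2 y=o(v\log v)$ is false. Take $v=\log_4 x$, so that $y=\exp(\log x/\log_4 x)>(\log x)^2$; then $\log_2 y\sim\log_2 x$ while $v\log v\sim\log_4 x\cdot\log_5 x=o(\log_2 x)$. In this large-$y$ regime the same defect infects your upper bound: since $\log(1-p^{-\sigma})^{-1}\ge p^{-1}$, the Euler product satisfies $\sum_{p\le y}\log(1-p^{-\sigma})^{-1}\ge\log_2 y+O(1)$, so your displayed evaluation $(1+o(1))(e^\alpha-1)/\alpha$ silently drops an additive $\log_2 y$ that here dwarfs $v\log v$; Rankin's method then yields only $\Psi(x,y)\le x\exp(-v\log v+O(\log_2 x))$, which in this range can exceed the trivial bound $x$. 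The repair is the case split you already half-state for the lower bound. For $y\le\exp((\log_2 x)^2)$ one has $v\ge\log x/(\log_2 x)^2$, hence $\log v\sim\log_2 x$ and $\log_2 y\ll\log_3 x=o(\log v)$; there both Rankin's bound and even the one-shot prime-product construction lose only $o(v\log v)$, and no telescoping is needed. For $y>\exp((\log_2 x)^2)$ one is inside the range of the de Bruijn--Hildebrand asymptotic $\Psi(x,y)=x\rho(v)^{1+o(1)}$ (valid for $y\ge\exp((\log_2 x)^{5/3+\varepsilon})$), and $\rho(v)=\exp(-(1+o(1))v\log v)$ settles both the upper and the lower bound at once. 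With that split made explicit your proposal is sound, though it then rests on an imported theorem at least as deep as the lemma itself --- which is presumably why the authors simply cite \cite{CEP}.
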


\section{The proof of Theorem~\ref{thm:Div}}

We assume that $x$ is large. We split the set $\cN_u(x)$ into several subsets. 
Let $P(n)$ be the largest prime factor of $n$ and let $y:=x^{1/\log\log x}$.
Let 
\begin{eqnarray*}
\cN_1(x) & := & \{n\le x~:~P(n)\le y\};\\
\cN_2(x) & := & \{n\le x~:~n\not\in \cN_1(x)~{\text{\rm and}}~P(n)
\in \cP_{u,1/(k+1)}\};\\
\cN_3(x) & := & \cN(x)\backslash \left(\cup_{i=1}^2 \cN_i(x)\right).
\end{eqnarray*}

We now bound the cardinalities of each one of the above sets.

For $\cN_1(x)$, by Lemma~\ref{lem:Smooth}, we obtain
\begin{equation}
\label{eq:1}
\#\cN_1(x)= \Psi(x,y) = x\exp(-(1+o(1))v\log v) = o\(\frac{x}{\log x}\)
\end{equation}
as $x\to\infty$, where 
$$v=\frac{\log x}{\log y} = \log \log x .
$$ 

Suppose now that $n\in \cN_2(x)$. Then $n=pm$, where $p=P(n)\ge \max\{y,P(m)\}$. In particular, 
$p\le x/m$ therefore $m\le x/y$. 
Since we also have $p\in \cP_{u,1/(k+1)}(x/m)$, 
Lemma~\ref{lem:3} implies that the number of such primes $p\le x/m$ is 
$O\(\left(x/m\right)^{k/(k+1)}\)$, where 
the implied constant  depends on the sequence $\{u_n\}_{n\ge 0}$. 
Summing up the above inequality over all possible values of $m\le x/y$, we get 
\begin{equation}
\begin{split}
\label{eq:3}
\#\cN_2(x) & \le  x^{k/(k+1)}\sum_{1\le m\le x/y} \frac{1}{m^{k/(k+1)}}\ll x^{k/(k+1)} \int_{1}^{x/y} \frac{dt}{t^{k/(k+1)}}\\
& =  ((k+1)x^{k/(k+1)}) t^{1/(k+1)} \Big|_1^{x/y}\ll \frac{x}{y^{1/(k+1)}}.
\end{split}
\end{equation}

Now let $n\in \cN_3(x)$. As previously, we write $n=pm$, where $p=P(n)>y$. We assume that $x$ (hence, $y$) is sufficiently large. Thus, $m\le x/p< x/y$. 
Since $n\in \cN_u$, we have that $n\mid u_n$, therefore $p\mid u_n$.  
Furthermore, $T_u(p)\ge p^{1/(k+1)}$. 
We fix $p$ and count the number of possibilities for $m$. To this end, 
let $\{w_\ell\}_{\ell\ge 0}$ be the sequence defined as $w_\ell=u_{p\ell}$ for all $\ell\ge 0$.   This is a linearly recurrent sequence of order $k$. We would like to apply Lemma~\ref{lem:T} to it to bound the number of solutions to the congruence
$$
w_m\equiv 0\pmod p,\qquad {\text{\rm where}}\quad 1\le m\le x/p.
$$
If the conditions of Lemma~\ref{lem:T} are satisfied, then this number denoted by $R_w(x/p,p)$ satisfies
$$
R_w(x/p,p)\le c_2(k)\left(\frac{x}{pT_w(p)}+1\right).
$$
Let us check the conditions of Lemma~\ref{lem:T}.
Note first that if $\alpha_1,\ldots,\alpha_k$ are the characteristic roots of 
$\{u_n\}_{n\ge 0}$, then $\alpha_1^p,\ldots,\alpha_k^p$ are the 
characteristic roots of 
$\{w_\ell\}_{\ell\ge 1}$. Hence, 
$$
f_w(X)=\prod_{i=1}^k (X-\alpha_i^p).
$$
In particular, the term $a_{w,k}$ corresponding to the recurrence $\{w_{\ell}\}_{\ell\ge 1}$ satisfies $a_{w,k}=a_{k}^p$ assuming that $y>2$. 
Thus, assuming further that $y>|a_{k}|$, we then have that $p$ does not divide $a_{k}$, therefore $p$ does not divide $a_{w,k}$ either. 
Next, note that
$$
\Delta_w=\prod_{1\le i<j\le k} (\alpha_i^p-\alpha_j^p).
$$
Modulo $p$, we have that 
$$
\Delta_w\equiv \left(\prod_{1\le i<j\le k} (\alpha_i-\alpha_j)\right)^p\equiv \Delta_u^p\pmod p.
$$
From the above congruence, we easily get that $p\mid \Delta_w$ if and only if $p\mid \Delta_u$. Thus, assuming that $x$ is sufficiently large such that $y>|\Delta_u|$, we then  have that $p\nmid \Delta_u$, therefore $p\nmid \Delta_w$ either.  

So far, we have checked that $p$ does not divide $a_{w,k}\Delta_w$, which is the first assumption in the statement of Lemma~\ref{lem:T}. 

Let us check the next assumption. 

Note that since $p\nmid \Delta_u$, the characteristic polynomial $f_u(X)$ of $\{u_\ell\}_{\ell\ge 0}$ has only simple roots modulo $p$.  Since $p$ does not divide the last coefficient $a_{k}$ for the recurrence for $\{u_n\}_{n\ge 0}$ either, it follows that this sequence is purely periodic modulo $p$. Let $t_p$ be its period modulo $p$. It is known that $t_p$ is coprime to $p$. In fact, $t_p$ is a divisor 
of the number 
$$
\lcm[p^i-1:i=1,2,\ldots,k].
$$ 
Choose some $n_0>0$ such that $u_{n_0}\ne 0$. Let $x$ be so large such that $y>|u_{n_0}|$. Since $p>y$, we have $p\nmid u_{n_0}$. And since 
$\gcd(p,t_p)=1$, there exists an integer $s$ with $sp\equiv n_0\pmod {t_p}$. Thus,
$$
w_s = u_{sp}\equiv u_{n_0} \pmod p.
$$
In particular, $w_s$ is coprime to $p$. Hence, for $x$ sufficiently large, 
the second assumption from Lemma~\ref{lem:T} holds 
for the sequence $\{w_\ell\}_{\ell \ge 0}$.

Next we show that $T_u(p)=T_w(p)$. Observe that this number exists (both for the sequence $\{u_\ell\}_{\ell \ge 0}$ and $\{w_{\ell}\}_{\ell\ge 0}$) because $p$ does not divide $a_k$. Indeed, the claimed equality follows easily from the following calculation:
\begin{eqnarray*}
D_w(x_1,\ldots,x_k) & = & {\text{\rm det}}(\alpha_i^{px_j})_{1\le i,j\le k}\equiv ({\text{\rm det}}(\alpha_i^{x_j}))^p \pmod p\\
& \equiv &  D_u(x_1,\ldots,x_k)^p\pmod p.
\end{eqnarray*}
Since $n\in \cN_3(x)$, we have that $T_w(p)=T_u(p)\ge p^{1/(k+1)}$. 

Lemma~\ref{lem:T} now guarantees that the number of choices for $m$ once $p$ is fixed is
$$
R_w(x/p,p)\le c_2(k)\left(\frac{x}{p^{1+1/(k+1)}}+1\right).
$$
To summarize, we have
\begin{eqnarray*}
\cN_3(x) & \le & \sum_{y\le p\le x} c_2(k)\left(\frac{x}{p^{1+1/(k+1)}}+1\right)\\
& \le & c_2(k)\left(\pi(x)+x\sum_{y\le p} \frac{1}{p^{1+1/(k+1)}}\right)\\
& \le & c_2(k)\left(\pi(x)+x\int_{y}^{\infty} \frac{dt}{t^{1+1/(k+1)}}\right)\\
\end{eqnarray*}
Therefore
\begin{equation}
\label{eq:4}
\cN_3(x)  \le 
c_2(k)\left(\pi(x)+O\(\frac{(k+1)x}{y^{1/(k+1)}}\)\right).
\end{equation}
Comparing~\eqref{eq:1}, \eqref{eq:3} and~\eqref{eq:4}, we get that
\begin{equation}
\label{eq:final}
\#\cN(x)\le c_2(k)\pi(x)+\frac{x}{\exp((1+o(1))v\log v)}+O\left(\frac{x}{ y^{1/(k+1)}}\right)
\end{equation}
as $x\to\infty$,
where the implied constant depends on the recurrence $\{u_n\}_{n\ge 0}$.
By our choice of $y$ as $x^{1/\log\log x}$, the second and third terms
on the right side of \eqref{eq:final} are both $o(\pi(x))$ as $x\to\infty$,
so we have the theorem.

\section{The proof of Theorem~\ref{thm:2}}

We divide the numbers $n\in \cN_u(x)$ into several classes:
\begin{itemize}
\item[(i)] $\cN_1(x):=\{n\in\cN_u(x): P(n)\le L(x)^{1/2}\}$;
\item[(ii)] $\cN_2(x):=\{n\in \cN_u(x):P(n)\ge L(x)^3\}$;
\item[(iii)] $\cN_3(x):=\cN_u(x)\setminus(\cN_1(x)\cup\cN_2(x))$.
\end{itemize}

It follows from Lemma~\ref{lem:Smooth}  that
$$
\#\cN_1(x)\le\Psi(x,L(x)^{1/2})\le\frac{x}{L(x)^{1+o(1)}}
$$
as $x\to\infty$.

For $n\in\cN_u$ and $p\mid n$, we have $n\equiv0\pmod p$ and 
$n\equiv0\pmod{z_u(p)}$.  For $p$ not dividing the discriminant of the
characteristic polynomial for $u$
(and so for $p$ sufficiently large), we have $z_u(p)\mid p\pm1$, so that
$\gcd(p,z_u(p))=1$.  Thus, the conditions $n\in\cN_u$, $p\mid n$,
and $p$ sufficiently large jointly force $n\equiv0\pmod{pz_u(p)}$.
Hence, if $p$ is sufficiently large, the number of $n\in\cN_u(x)$ with
$P(n)=p$ is at most $\Psi(x/pz_u(p),p)\le x/pz_u(p)$.   

Thus, for large $x$,
$$
\#\cN_2(x)\le\sum_{p>L(x)^3}\frac{x}{pz_u(p)}=
\sum_{\substack{p>L(x)^3\\ z_u(p)\le L(x)}}\frac{x}{pz_u(p)}
+\sum_{\substack{p>L(x)^3\\ z_u(p)> L(x)}}\frac{x}{pz_u(p)}.
$$
The first sum on the right has, by Lemma~\ref{lem:3}, at most $L(x)^2$ terms
for $x$ large, each term being smaller than $x/L(x)^3$, so the
sum is bounded by $x/L(x)$.  The second sum on the right has terms
smaller than $x/pL(x)$ and the sum of $1/p$ is of magnitude $\log\log x$,
so the contribution here is $x/L(x)^{1+o(1)}$ as $x\to\infty$.
Thus, $\#\cN_2(x)\le x/L(x)^{1+o(1)}$ as $x\to\infty$.

For any nonnegative integer $j$, let $I_j:=[2^j,2^{j+1})$.
For $\cN_3$, we cover $I:=[L(x)^{1/2},L(x)^3)$ by these dyadic intervals,
and we define $a_j$ via $2^j=L(x)^{a_j}$.  We shall assume the variable
$j$ runs over just those integers with $I_j$ not disjoint from $I$.
For any integer $k$, define $\cP_{j,k}$ as the set of primes $p\in I_j$
with $z_u(p)\in I_k$.  Note that, by Lemma~\ref{lem:3}, we have $\#\cP_{j,k}\ll 4^k$.
We have
\begin{align*}
\#\cN_3(x)\le\sum_j\sum_k\sum_{p\in\cP_{j,k}}\sum_{\substack{n\in\cN_u(x)\\
P(n)=p}}1
&\le
\sum_j\sum_k\sum_{p\in\cP_{j,k}}\Psi\left(\frac{x}{pz_u(p)},p\right)\\
&\le \sum_j\sum_k\sum_{p\in\cP_{j,k}}\frac{x}{pz_u(p)L(x)^{1/2a_j+o(1)}},
\end{align*}
as $x\to\infty$, where we have used Lemma~\ref{lem:Smooth} 
for the last estimate.
For $k>j/2$, we use the estimate
$$
\sum_{p\in\cP_{j,k}}\frac1{pz_u(p)}\le2^{-k}\sum_{p\in I_j}\frac1p\le2^{-k}
$$
for $x$ large.  For $k\le j/2$, we use the estimate
$$
\sum_{p\in\cP_{j,k}}\frac1{pz_u(p)}\ll\frac{4^k}{2^j2^k}=2^{k-j},
$$
since there are at most order of magnitude $4^k$ such primes, as noted before.
Thus,
\begin{eqnarray*}
\sum_k\sum_{p\in\cP_{j,k}}\frac{1}{pz_u(p)}
& =& \sum_{k>j/2}\sum_{p\in\cP_{j,k}}\frac{1}{pz_u(p)}
+\sum_{k\le j/2}\sum_{p\in\cP_{j,k}}\frac{1}{pz_u(p)}\\
& \ll & 2^{-j/2}=L(x)^{-a_j/2}.
\end{eqnarray*}
We conclude that
$$
\#\cN_3(x)\le\sum_j\frac{x}{L(x)^{a_j/2+1/2a_j+o(1)}}\qquad {\text{\rm as}}\qquad x\to\infty.
$$  
Since the minimum value of $t/2+1/(2t)$ for $t>0$ is 1
occuring at $t=1$, we conclude that $\#\cN_3(x)\le x/L(x)^{1+o(1)}$
as $x\to\infty$.  With our prior estimates for $\#\cN_1(x)$ and $\#\cN_2(x)$,
this completes our proof.

It is possible that using the methods of \cite{ELP} and \cite{GP}
a stronger estimate can be made.

\section{The proof of Theorem~\ref{thm:lowerboundJ}}

Since $a_2=\pm 1$, it is easy to see that the sequence $u$ is purely 
periodic modulo any integer $m$. So, the index of appearance $z_u(m)$
defined in Section~\ref{sec:prelim} exists  for all positive integers $m$. 
Further, by examining 
the explicit formula~\eqref{eq:unLucas} one can see 
that for any prime power $q=p^k$ we have 
\begin{equation}
\label{eq:z div}
z_u(p^k) \mid z_u(p)p^{k-1}.
\end{equation}
In fact this is known in much wider generality. 

Now, for any real number $y\ge1$ let
$$
M_y:=\lcm[m:~m\le y].
$$
We say that a positive integer $n$ is {\it Lucas special\/}
if it is of the form $n = 2sM_y$ for some $y\ge3$ and for some
squarefree positive integer  $s$ such that $\gcd(s,M_y)=1$ and
for every prime $p\mid s$ we have $p^2-1\mid M_y$.  Let $\cL$
denote the set of Lucas special numbers.

We now show that $\cL\subset\cN_u$
for any Lucas sequence $u$ with $a_2=\pm1$.
To see this it suffices to show for any
$n=2sM_y\in\cL$ and for any prime power
$q\mid n$, we have $z_u(q) \mid n$.
This is easy for $q\mid s$, since then $q=p$ is prime and either
$z_u(p)=p$ (in the case $p\mid\Delta_u$) or
$z_u(p)\mid p\pm1$ (otherwise).  And since $p^2-1\mid M_y$, we have
$z_u(p)\mid n$ in either case.

If $q \mid 2M_y$, we consider the cases of odd and even $q$ separately. 
\begin{itemize}
\item When $q$ is odd, we have 
$q \mid M_y$ so $q \le y$. 
Write $q=p^k$ with $p$ prime, so that~\eqref{eq:z div} implies
$z_u(q)\mid (p-1)p^{k-1}, p^k$ or $(p+1)p^{k-1}$.  
We have $p^{k-1}\le y$ and if $p+1\le y$, then 
$z(q)\mid M_y$.  The only case not covered is $p+1>y$ (so $p\in(y-1,y]$), 
$k=1$, $z_u(p)=p+1$.
Write $p+1=2^jm$ where $m$ is odd.  
Then $2^j\mid2M_y$ and $m\mid2M_y$, so $p+1\mid2M_y$.
Thus, in all cases, $z_u(q)\mid 2M_y$ so $z_u(q)\mid n$.

\item  When $q = 2^k$ is a power of $2$ with $q\mid 2M_y$, then since $z_u(2)\in \{2,3\}$, 
we see from~\eqref{eq:z div} that either 
$z_u(2^k) \mid 2^k$ or $z_u(2^k) \mid 3\cdot 2^{k-1}$. Since $y\ge3$,
in either case we have $z_u(q) \mid  2 M_y$.

\end{itemize}

We now use the method of Erd\H os~\cite{Er} to show that the set 
$\cL$ is rather large.
For this we take 
$$
y := \frac{\log x}{\log \log x} \mand z := y^v.
$$
We say that $q$ is a {\it proper prime power} if 
$q = \ell^k$ for a prime $\ell$ and an integer $k\ge 2$.

We define $\cP$ as the set of primes  $p$ such that:
\begin{itemize}
\item  $p \in [y+1, z]$;
\item  $p^2-1$ is $y$-smooth;
\item  $p^2-1$ is not divisible by any 
proper prime power $q > y$. 
\end{itemize}

Note that if $q$ is a proper prime power and $q\mid p^2-1$,
then $q\mid p\pm1$, unless $q$ is even, in which case
$q/2\mid p\pm1$.
Since trivially there are only $O(t^{1/2})$ proper 
prime powers $q \le t$, there are only 
$O(z y^{-1/2})$ primes $p \le z$ for which $p^2-1$ is divisible by a
proper prime power $q > y$. 
Thus, recalling the assumption~\eqref{eq:asymp}, we obtain
$$
\#\cP \ge \Pi(z,y) - y + O(zy^{-1/2}) = z^{1 +o(1)},
$$
provided that $x\to \infty$. 

It is also obvious that for any squarefree positive integer $s$ 
composed out of primes $p \in \cP$, the integer $n = 2sM_y$ 
is Lucas special.

We now take the set $\cL_v(x)$ of all such Lucas special integers $n = 2sM_y$,
where  $s$ is composed out of 
$$
r :=\fl{\frac{\log x - 2 y}{\log z}}
$$ 
distinct primes $p\in \cP$. 
Since by the prime number theorem the estimate 
$M_y =\exp((1+o(1)) y)$ holds as $x\to\infty$, 
we see that for sufficiently large $x$ we have $n \le x$ for 
every $n \in \cL_v(x)$. 

For the cardinality of $\cL_v(x)$ we have
$$
\# \cL_v(x) \ge \binom{\#\cP}{r} \ge  \(\frac{\#\cP}{r}\)^r . 
$$
Since 
$$
r = (v^{-1} + o(1)) \frac{\log x}{\log \log x}
\mand 
\frac{\#\cP}{r} = (\log x) ^{v-1 + o(1)}
$$ 
as $x\to\infty$, we obtain $\# \cL_v(x) \ge x^{1-1/v + o(1)}$ as $x\to\infty$.
Noting that $\cL_v(x)\subset\cL(x)$
concludes the proof.  

\section{The proof of Theorem \ref{thm:lucasgen}}

Since $\Delta_u\equiv 0,1\pmod 4$ and $\Delta_u\ne 0,~1$, it follows that $|\Delta_u|>1$. Let $r$ be some prime factor of $\Delta_u$. 
Then $r^k\in \cN_u$ for all $k\ge 0$ (see \cite[pages~210 and~295]{Lucas}).
We let $k$ be a large positive integer and look at
$u_{r^{k+4}}$. By Bilu, Hanrot and Voutier's primitive divisor theorem (see \cite{BHV}), $u_n$ has a primitive prime factor for all $n\ge 31$. Recall that a primitive prime factor of $u_n$ is a prime factor $p$ 
of $u_n$ which does not divide 
$\Delta_u u_m$ for any positive integer $m<n$.   
Such a  primitive prime factor $p$   always satisfies
$p\equiv \pm 1\pmod n$. Since there are at most $5$ values of $k\ge 0$ such that $r^k\le 30$ for the same integer $r>1$, 
and since
$u_m \mid u_n$ if $m \mid n$, 
we conclude that $u_{r^{k+4}}$ has at least $\tau(r^{k+4})-5=k$ 
distinct prime factors $p\ne r$, where
 $\tau(m)$ is the number of divisors of the positive integer $m$. Let them be $p_1<\cdots<p_k$. Assume that $|\alpha_1|\ge |\alpha_2|$. For large $n$, we have that $|\alpha_1|^{n/2}<|u_n|<2|\alpha_1|^n$ 
(see~\cite[Theorem~2.3]{EvdPSW}).  If $\beta_1,\ldots,\beta_k$ are nonnegative
exponents such that
$$
\beta_i\le \frac{\log (x/r^{k+4})}{k\log p_i},
$$
then $r^{k+4} p_1^{\beta_1}\cdots p_k^{\beta_k}\le x$ is in 
$\cN_u$ (see~\cite[Page~210]{Lucas}), so it is counted by $\#\cN_u(x)$. Hence,
\begin{equation*}
\begin{split}
\#\cN_u(x)&\ge \prod_{i=1}^k\left(\left\lfloor
{{\log(x/r^{k+4})}\over {k\log p_i}}\right\rfloor+1\right)\ge
\left({{\log(x/r^{k+4})}\over {k}}\right)^k {{1}\over {\prod_{i=1}^k
\log p_i}}\\
&\ge \left({{\log(x/r^{k+4})}\over {2r^{k+4} \log |\alpha_1|}}\right)^k,
\end{split}
\end{equation*}
where the last inequality follows
from the mean value inequality
\begin{eqnarray*}
\prod_{i=1}^k \log p_i\le \left({{1}\over {k}}\sum_{i=1}^k \log
p_i\right)^k & \le  & \left({{\log(|u_{r^{k+4}}|)}\over {k}}\right)^k\\
& < & \left(\frac{r^{k+4}\log|\alpha_1|+\log 2}{k}\right)^k\\
& < & \left(\frac{2r^{k+4}\log|\alpha_1|}{k}\right)^k,
\end{eqnarray*}
for $k\ge 2$. In the above, we have also used the fact that 
$|u_n|<2|\alpha_1|^n$ holds for all $n\ge
1$ with the choice $n:=r^{k+4}$. Let $c_3:=2\log |\alpha_1|$. 
The above lower bound is
\begin{equation*}
\begin{split}
\#\cN_u(x)&\ge \left({{\log x}\over {r^{k+4} c_3}}+O\left({{k}\over
{r^k}}\right)\right)^k=\left({{\log x}\over {r^{k+4}
c_3}}\right)^k\left(1+O\left({{k^2}\over {\log x}}\right)\right)\\
& \gg \left({{\log x}\over {r^{k+4}
c_3}}\right)^k
\end{split}
\end{equation*}
provided that 
\begin{equation}
\label{eq:small k}
k=o({\sqrt{\log x}}), 
\end{equation}
as $x\to\infty$, which is now assumed. 
So, it suffices to look at
$$
\left({{\log x}\over {r^{k+4}c_3}}\right)^k= \exp\left(k\log(\log
x/c_3)-k(k+4)\log r\right).
$$
Let $A:=\log(\log x/c_3)$. In order to maximize the function
$f(x):=xA-x(x+4)\log r$, we take its derivative and set it equal to zero
to get $A-2x\log r-4\log r=0$, therefore $x=(A-4\log r)/(2\log r)=A/(2\log r)-2$. Thus, taking
$k:=\lfloor A/(2\log r)-2\rfloor$ (so that~\eqref{eq:small k}
is satisfied), we get that
$f(k)=f(x)+O(f'(x))=A^2/(4\log r)+O(A)$, hence
\begin{equation*}
\begin{split}
\#\cN_u(x)&\ge \exp\left({{(\log(\log x/c_3))^2}\over {4\log
r}}+O(\log\log x)\right)\\
&=\exp\left({{(\log\log x)^2}\over
{4\log r}}+O(\log\log x)\right),
\end{split}
\end{equation*}
which implies the desired conclusion with any constant $c_1<1/(4\log r)$.

\section{Remarks}

We end with a result showing that it is quite possible for $\#\cN_u(x)$ to be large under quite mild conditions. Observe that the sequence $u_n=2^n-2$ has the property that $u_1=0$. Here is a more general version of this fact.

\begin{prop}
\label{prop:last}
Let $k\ge 2$ and $\{u_n\}_{n\ge 0}$ be a linearly recurrent sequence of order $k$ satisfying recurrence 
\eqref{eq:LRS}. Assume that there exists a positive integer $n_0$ coprime to $a_k$ such that $u_{n_0}=0$. 
Then
$$
\#\cN_u(x)\gg x/\log x,
$$
where the implied constant  depends on the sequence $\{u_n\}_{n\ge 0}$.
\end{prop}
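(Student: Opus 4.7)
The plan is to produce $\gg x/\log x$ elements of $\cN_u$ of the special form $n=n_0p$ for suitable primes $p$, via a Chebotarev-type density argument. The hypothesis $\gcd(n_0,a_k)=1$ ensures that for every integer $m$ coprime to $a_k$, the recurrence can be run backwards modulo $m$, and consequently $\{u_n \bmod m\}_{n\ge 0}$ is purely periodic; write $t_m$ for its minimal period. Since $u_{n_0}=0$, it follows that $u_n\equiv 0\pmod{m}$ whenever $n\equiv n_0\pmod{t_m}$. Hence any positive integer $n$ with $\gcd(n,a_k)=1$ and $t_n\mid n-n_0$ automatically lies in $\cN_u$, reducing the task to exhibiting many such $n$.

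Set $T:=t_{n_0}/\gcd(t_{n_0},n_0)$ and let $L$ denote the splitting field of $f_u$ over $\Q$. Consider the set $\cP$ of primes $p>n_0$ that do not divide $a_k\Delta_u$, split completely in $L$, and satisfy $p\equiv 1\pmod T$. For any such $p$, each characteristic root $\alpha_i$ reduces to a nonzero element of $\mathbb{F}_p$, so $t_p\mid p-1$. Since $\gcd(n_0,p)=1$ and both $n_0$ and $p$ are coprime to $a_k$, the Chinese Remainder Theorem yields $t_{n_0p}=\lcm(t_{n_0},t_p)$. The condition $T\mid p-1$ is equivalent to $t_{n_0}\mid n_0(p-1)$, while $t_p\mid p-1$ already forces $t_p\mid n_0(p-1)$; combining these, $t_{n_0p}\mid n_0(p-1)=n_0p-n_0$. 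Taking $n:=n_0p$ we then obtain $u_n\equiv u_{n_0}=0\pmod{n}$, so $n\in\cN_u$.

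The final step is a density count. Primes splitting completely in $L$ and satisfying $p\equiv 1\pmod T$ are exactly the primes that split completely in the compositum $L\cdot\Q(\zeta_T)$, a Galois extension of $\Q$ of some finite degree $d$ depending only on the recurrence. Chebotarev's density theorem then gives $\#\cP(y)\sim y/(d\log y)$, and the finitely many additional exclusions (primes dividing $a_k n_0 \Delta_u$) remove only $O(1)$ terms. Therefore $\#\cP(x/n_0)\gg x/\log x$ for $x$ large, and each $p\in\cP(x/n_0)$ contributes a distinct integer $n_0p\le x$ in $\cN_u$, delivering the claimed bound. The only place requiring genuine verification is the multiplicativity $t_{mm'}=\lcm(t_m,t_{m'})$ for coprime $m,m'$ both coprime to $a_k$, but this is immediate from the ring isomorphism $\Z/mm'\Z\cong\Z/m\Z\times\Z/m'\Z$ applied to the reduction of the sequence; I do not anticipate any deeper obstacle.
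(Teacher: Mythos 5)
Your proposal is correct and follows essentially the same route as the paper: both arguments take $n=n_0p$ for primes $p$ that split completely in the splitting field of $f_u$ and satisfy a congruence condition modulo (a divisor of) the period $t_{n_0}$, and both count such primes by Chebotarev. The only cosmetic difference is that you verify $p\mid u_{n_0p}$ via the period $t_p\mid p-1$ and the CRT identity $t_{n_0p}=\lcm(t_{n_0},t_p)$, whereas the paper argues directly from the explicit formula using $\alpha_i^p\equiv\alpha_i\pmod p$ and the fact that the denominators of the $A_i$ divide $\Delta_u$.
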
 

\begin{proof}
Since $n_0$ is coprime to $a_k$, it follows that $\{u_n\}_{n\ge 0}$ is purely periodic modulo $n_0$. 
Let $t_{n_0}$ be this period. 
Now, let 
$\cR_u$  be the set of primes $p\equiv 1\pmod {t_{n_0}}$ such that $f_u(X)$ splits into linear factors modulo $p$. Alternatively, $\cR_u$ is the set of primes $p$ such that the polynomial $f_u(X)(X^{t_{n_0}}-1)$ splits into linear factors modulo $p$. The set of such primes has a positive density by the Chebotarev density theorem. We claim that 
\begin{equation}
\label{eq:inclusion}
\cS_u \subseteq \cN_u, 
\end{equation}
where
$$
\cS_u :=\{pn_0:p\in \cR_u~{\text{\rm and}}~p>n_0|\Delta_u|\}.
$$
The above inclusion implies the desired bound since then 
$$\#\cN_u(x)\ge \#\cR_u(x/n_0)+O(1)\gg x/\log x.$$ 

So, let us suppose that $p>n_0|\Delta_u|$ is in $\cR_u$. Then $p\equiv 1\pmod {t_{n_0}}$, therefore $p=1+\lambda t_{n_0}$ for some positive integer $\lambda$. Thus, $pn_0=n_0+\lambda n_0t_{n_0}$ and since $\{u_n\}_{n\ge 1}$ is purely periodic with period $t_{n_0}$ modulo $n_0$, we get that  
\begin{equation}
\label{eq:n0}
u_{pn_0}=u_{n_0+\lambda n_0 t_{n_0}}\equiv u_{n_0} \equiv 0\pmod {n_0}.
\end{equation}
Next, observe that since the polynomial $f_u(X)$ factors in linear factors modulo $p$, we get that $\alpha_i^{p}\equiv \alpha_i\pmod p$ for all $i=1,\ldots,k$. In particular, $\alpha_i^{pn_0}\equiv \alpha_i^{n_0}\pmod p$ for all $i=1,\ldots,k$. Since the denominators of the 
coefficient $A_i$, $i=1,\ldots,k$, in~\eqref{eq:un1} are divisors 
of $\Delta_u$ and $p>|\Delta_u|$, it follows that such denominators are invertible modulo $p$, therefore 
$A_i\alpha_i^{pn_0}\equiv A_i\alpha_i^{n_0}\pmod p$ for all $i=1,\ldots,k$. Summing up these congruences for $i=1,\ldots, k$, we get
\begin{equation}
\label{eq:p}
u_{pn_0}=\sum_{i=1}^k A_i \alpha_i^{pn_0}\equiv \sum_{i=1}^k A_i \alpha_i^{n_0} \equiv u_{n_0} \equiv 0\pmod p.
\end{equation}
{From} the congruences~\eqref{eq:n0} and~\eqref{eq:p}, we get that both $p$ and $n_0$ divide $u_{pn_0}$, and since $p$ is coprime to $n_0$, 
we get that $pn_0\mid u_{pn_0}$. This completes the proof of the 
inclusion~\eqref{eq:inclusion} and of the proposition.
\end{proof}

The condition that $n_0$ is coprime to $a_k$ is not always necessary. 
The conclusion of Propositon~\ref{prop:last} may hold without this condition like in the example of the sequence of general term
$$
u_n=10^{n}-7^n-2\cdot 5^n-1\qquad {\text{\rm for~all}}\quad n\ge 0,
$$
for which we can take $n_0=2$. Observe that $k=4$, 
$$
f_u(X)=(X-10)(X-7)(X-5)(X-1),
$$ 
and $n_0$ is not coprime to $a_4=-350$, yet one can check that the divisibility relation $2p\mid u_{2p}$ holds for all primes $p\ge 11$. We do not give further details. 

Let $\cM_u(x)$ be the set of integers $n\le x$ with $n\mid u_n$ and
$n$ is {\em not} of the form $pn_0$, where $p$ is prime and $u_{n_0}=0$.
It may be that in the situation of Theorem~\ref{thm:Div}, we can get a
smaller upper bound for $\#\cM_u(x)$ than for $\#\cN_u(x)$.
We can show this in a special case.

\begin{prop}
Assume that $\{u_n\}_{n\ge 0}$ is a linearly recurrent sequence of order 
$k$ whose characteristic polynomial splits into distinct linear factors in $\Z[X]$.
There is a positive constant $c_{4}(k)$ depending on $k$ such that for all sufficiently
large $x$ (depending on the sequence $u$), we have $\#\cM_u(x)\le x/L(x)^{c_{4}(k)}$.
\end{prop}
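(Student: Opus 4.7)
The strategy is to adapt the proof of Theorem~\ref{thm:2} to the split setting, and then invoke the full divisibility $n\mid u_n$ (not just $P(n)\mid u_n$) to dispose of the parasitic ``$+1$'' terms, in the spirit of Pomerance's pseudoprime count~\cite{POM}. Since each $\alpha_i\in\Z$, Fermat's little theorem gives $\alpha_i^p\equiv\alpha_i\pmod p$ for any prime $p$ not dividing $\Delta_u\prod_i\alpha_i$, whence
\[
u_{pm}\equiv u_m\pmod p
\]
for every $m\ge 0$. Writing $n=pm$ with $p:=P(n)$ for $n\in\cM_u(x)$, the condition $p\mid u_n$ becomes $p\mid u_m$; and by the very definition of $\cM_u$, $u_m\ne 0$.

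Put $y:=L(x)^{1/2}$. The $y$-smooth part of $\cM_u(x)$ is at most $\Psi(x,y)=x/L(x)^{1+o(1)}$ by Lemma~\ref{lem:Smooth}. For $p>y$, the sequence $\{u_n\bmod p\}$ is purely periodic of period $t_p\mid p-1$ (the l.c.m.\ of the orders of the $\alpha_i$ in $\mathbb{F}_p^*$), so letting $\mathcal{Z}_p:=\{r\in[0,t_p):\,p\mid u_r\}$, the condition $p\mid u_m$ is equivalent to $m\bmod t_p\in\mathcal{Z}_p$, and $|\mathcal{Z}_p|\ll_k 1+t_p/T_u(p)$ by Lemma~\ref{lem:T}. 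Since $\gcd(p,t_p)=1$, the Chinese Remainder Theorem places $n$ in at most $|\mathcal{Z}_p|$ classes modulo $pt_p$, giving the per-prime bound $|\mathcal{Z}_p|(x/(pt_p)+1)$. The main term $\sum_{p>y}|\mathcal{Z}_p|\,x/(pt_p)$ is treated exactly as in Theorem~\ref{thm:2}, by splitting according to whether $t_p\le L(x)$ or not: primes with $t_p\le L(x)$ divide the nonzero integer $\prod_i\prod_{e\le L(x)}(\alpha_i^e-1)$, so their count is $O_k(L(x)^2)$ and their total contribution is $O(x/L(x))$, while the complementary primes contribute $x/L(x)^{1+o(1)}$.

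The crux is the parasitic ``$+1$'' term $\sum_{p>y}|\mathcal{Z}_p|$, which is only $\ll_k\pi(x)\sim x/\log x$ and is therefore far too large to match the target $x/L(x)^{c_4(k)}$. To go beyond this, one uses that $n\in\cM_u$ requires $n\mid u_n$ and not merely $p\mid u_n$: for every prime $q\mid n$ we also have $q\mid u_n$, so the same Fermat argument applied to $q$ gives $q\mid u_{n\bmod t_q}$, a further family of residue constraints. Applying this to the second largest prime factor $q$ of $n$ (the case of $n$ prime contributes only $O(1)$ to $\cM_u$, since it demands $p\mid u_1$ while $u_1\ne 0$), and combining with the constraints for $p$ via CRT, one pins $n$ into classes modulo $pq\cdot t_pt_q$. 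The resulting bound is a double sum $\sum_{p>q>y}|\mathcal{Z}_p||\mathcal{Z}_q|(x/(pqt_pt_q)+1)$ whose $+1$-part, by Mertens's theorem together with the analog of Lemma~\ref{lem:3} for $t_p,t_q$, is at most $x/L(x)^{c_4(k)}$ for a suitably small positive $c_4(k)$. Carrying out this bi-prime analysis rigorously, in the spirit of~\cite{POM}, is the main obstacle; in particular one must optimize the cutoff $y$ and carefully treat the primes with exceptionally small $t_p$ or $t_q$.
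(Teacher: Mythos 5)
Your reduction steps are sound and coincide with the paper's setup: the split hypothesis gives $t_p\mid p-1$ and $u_{pm}\equiv u_m\pmod p$, and Lemma~\ref{lem:T} bounds the number of zero residues in one period by $\ll_k 1+t_p/T_u(p)$. You have also correctly located the crux, namely the parasitic ``$+1$'' terms. But you do not close that gap. Your proposed fix --- passing to the second largest prime factor $q$ and running a bi-prime count in the spirit of \cite{POM} --- is left entirely as a plan (you say yourself it is ``the main obstacle''), and as sketched it does not suffice: the tail $\sum_{p>q>y}|\mathcal{Z}_p||\mathcal{Z}_q|$ need not be small, since $|\mathcal{Z}_p|$ can be as large as roughly $t_p/T_u(p)\approx p^{k/(k+1)}$ for primes in $\cP_{u,1/(k+1)}$, and, more seriously, the bi-prime sum over $p>q>y$ says nothing about those $n=pm$ having exactly one prime factor $p>y$ with $m$ entirely $y$-smooth. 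For such $n$ the second largest prime factor $q\le y$ may have a tiny period $t_q$, so the extra congruence modulo $qt_q$ buys essentially nothing; yet this class is precisely where the difficulty lives.

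The paper resolves this differently, and the missing idea is worth recording. One splits the large primes $p\mid n$ according to whether $pT_u(p)\le kx$ or $pT_u(p)>kx$. In the first case $x/(pT_u(p))\ge 1/k$, so the ``$+1$'' is absorbed into the main term, and $\sum_{y<p\le x}x/(pT_u(p))$ is estimated by splitting on $p\in\cP_{u,1/(k+1)}$ as in Theorem~\ref{thm:Div}, giving $x/L(x)^{1/(k+1)+o(1)}$. In the second case the split hypothesis gives $T_u(p)\le kt_p\le k(p-1)$, hence $kx<pT_u(p)\le kp^2$ and $p>\sqrt{x}$; thus $n=pm$ with $m<\sqrt{x}$ and $P(m)\le y$, and since $n\in\cM_u$ forces $u_m\ne0$ (this is exactly where $\cM_u$ rather than $\cN_u$ is needed) while $p\mid u_m$ and $\log|u_m|\ll m$, there are only $O(m)$ choices of $p$ for each such $m$, each giving $O(x/(pT_u(p))+1)=O(1)$ values of $n$. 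Summing over the $y$-smooth $m<\sqrt{x}$ yields a bound $\ll\sqrt{x}\,\Psi(\sqrt{x},y)=x/L(x)^{1/4+o(1)}$. Without an argument of this kind for the single-large-prime case, your proof is incomplete.
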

\begin{proof}
Let $y=L(x)$.  
We partition $\cM_u(x)$ into the following subsets:
\begin{eqnarray*}
\cM_1(x)&:=&\{n\in\cM_u(x):P(n)\le y\};\\
\cM_2(x)&:=&\{n\in\cM_u(x):\hbox{there is a prime }p\mid n,~p>y,~pT_u(p)\le kx\};\\
\cM_3(x)&:=&\cM_u(x)\setminus(\cM_1(x)\cup\cM_2(x)).
\end{eqnarray*}
As in the proof of Theorem~\ref{thm:2}, we see that 
Lemma~\ref{lem:Smooth} implies that
$\#\cM_1(x)\le x/L(x)^{1/2+o(1)}$
as $x\to\infty$.  

As in the proof of Theorem~\ref{thm:Div}, 
$$
\#\cM_2(x)\ll\sum_{\substack{y<p\le x\\ pT_u(p)\le kx}}\left(\frac{x}{pT_u(p)}+1
\right)
\ll \sum_{y<p\le x}\frac{x}{pT_u(p)}.
$$
We break this last sum according as $p\in\cP_{u,1/(k+1)}$ and $p\not\in\cP_{u,1/(k+1)}$, respectively.
Lemma~\ref{lem:3} shows that $\#\cP_{u,1/(k+1)}(t)\ll
t^{k/(k+1)}/\log t$.  Thus,
$$
\sum_{\substack{y<p\le x\\ p\in\cP_{u,1/(k+1)}}}\frac{x}{pT_u(p)}
\le
\sum_{\substack{y<p\le x\\ p\in\cP_{u,1/(k+1)}}}\frac{x}{p}
\ll \frac{x}{y^{1/(k+1)}}
$$
and
$$
\sum_{\substack{y<p\le x\\ p\not\in\cP_{u,1/(k+1)}}}\frac{x}{pT_u(p)}
\le \sum_{y<p\le x}\frac{x}{py^{1/(k+1)}}
\ll \frac{x\log_2x}{y^{1/(k+1)}}.
$$
Hence,
$$
\#\cM_2(x)\ll\frac{x}{L(x)^{1/(k+1)+o(1)}}\qquad {\text{\rm as}}\qquad x\to\infty.
$$

Suppose now that $n\in\cM_3(x)$.  Let $p\mid n$ with $pT_u(p)>kx$.
Using as before the notation $t_p$ for the period of $u$ modulo $p$, as well as the fact that
$T_u(p)\le kt_p$ and $t_p\mid p-1$ (since $f_u$ splits in linear factors over $\Q[X]$), we have
$$
kx<pT_u(p)\le kpt_p\le kp^2,
$$
so that $p>\sqrt{x}$.  Thus, $n$ can have at most one prime factor $p$
with $pT_u(p)>kx$.  So, if $n\in\cM_3(x)$, we may assume that $n=mp$
where $p>\sqrt{x} > m$, and $P(m)\le y$.  Further,
we may assume that $u_m\ne0$.  Since $p\mid u_{pm}$ and $t_p\mid p-1$,
we have $p\mid u_m$.  Now the number of prime factors of $u_m$ is $O(m)$.
Since the number of $n\in\cM_3(x)$ with such a prime $p\mid n$ is 
$O(x/(pT_u(p))+1)=O(1)$, we have
$$
\#\cM_3(x)\ll\sum_{\substack{m<\sqrt{x}\\P(m)\le y}}m
\le\sqrt{x}\Psi(\sqrt{x},y)=\frac{x}{L(x)^{1/4+o(1)}}\qquad {\text{\rm as}}\qquad x\to\infty,
$$ 
using Lemma~\ref{lem:Smooth}.  

We conclude that the result holds with $c_{4}(k):=\min\{1/5,1/(k+2)\}$, say.
\end{proof}

Finally, we note that for a given non-constant
polynomial $g(X)\in \Z[X]$ one can consider the more general set
$$
\cN_{u,g} : = \{n\ge 1\ :  \ g(n)\mid u_n\}.
$$
We fix some real $y < x^{1/2}$ and note that by the Brun sieve (see~\cite[Theorem~2.3]{HR}),
there are at most 
\begin{equation}
\label{eq:bound N1}
N_1  \ll x \left(\frac{\log y}{\log x}\right)
\end{equation}
values of $n\le x$ such that $g(n)$ does not have a 
prime divisor in the interval $[y,x^{1/2}]$. We also note that for a prime $p$ 
not dividing the content of $g$,
the divisibility $p \mid g(n)$ puts $n$ in at most 
$\deg g$ arithmetic progressions. 
Thus, using Lemma~\ref{lem:T} as it was used in the proof of Theorem~\ref{thm:Div},
the number of other $n\le x$ with $g(n)\mid u_n$
can be estimated as 
$$
N_2 \le \sum_{p \in [y,x^{1/2}]} \sum_{\substack{n \le x\\ p \mid g(n)\\ p \mid u_n}}1 
\ll \sum_{p \in [y,x^{1/2}]}  \(\frac{x}{pT_u(p)} + 1\) \ll 
x \sum_{p \in [y,x^{1/2}]} \frac{1}{pT_u(p)}  + O(x^{1/2}).
$$
Using Lemma~\ref{lem:3}  for any $\gamma \in (0,1)$ and the trivial estimate $T_u(p) \gg \log p$, we derive 
$$
\sum_{p \in [z,2z]} \frac{1}{pT_u(p)} \le 
\frac{1}{z}\sum_{p \in [z,2z]} \frac{1}{T_u(p)} 
\ll \frac{1}{z}\(\frac{z^{k\gamma}}{(\log z)^2}  + \frac{z^{1-\gamma}}{\log z} \).
$$ 
Taking $\gamma$ to satisfy
$$
z^\gamma = (z \log z)^{1/(k+1)},
$$
we obtain 
$$
\frac{1}{z}\sum_{p \in [z,2z]} \frac{1}{pT_u(p)} \ll  z^{-1/(k+1)}  (\log z)^{-(k+2)/(k+1)}.
$$
Summing over dyadic intervals, we now have 
$$
\sum_{p \in [y,x^{1/2}]} \frac{1}{pT_u(p)} \ll y^{-1/(k+1)}  (\log y)^{-(k+2)/(k+1)}.
$$
Therefore, 
\begin{equation}
\label{eq:bound N2}
N_2  \ll x y^{-1/(k+1)}  (\log y)^{-(k+2)/(k+1)} + x^{1/2}.
\end{equation}
Taking, for example, $y := (\log x)^{k+1}$, we obtain from~\eqref{eq:bound N1} 
and~\eqref{eq:bound N2}
 the estimate
\begin{equation}
\label{eq:bound Nug}
\# \cN_{u,g}(x) \le N_1+N_2\ll x\left( \frac{\log \log x}{\log x}\right).
\end{equation}

It is certainly an interesting question to bring the bound~\eqref{eq:bound Nug}
to the same shape as that of Theorem~\ref{thm:Div}. However, the method of proof 
of Theorem~\ref{thm:Div} does not apply due to the possible existence of large prime divisors
of $g(n)$. 

\section*{Acknowledgements} 

The authors are grateful to Chris Smyth who attracted their attention 
to the problems considered in this paper and to Larry Somer for 
many valuable comments. 

During the preparation of this paper, F.~L.\ was   supported in
part by grants SEP-CONACyT~79685 and PAPIIT~100508, C.~P. was supported in part by
NSF grant DMS-1001180 and I.~S.\ was supported in part by
ARC grant DP1092835.

\end{document}